\let\oldmarginpar\marginpar
\renewcommand\marginpar[1]{\oldmarginpar[\raggedleft\footnotesize #1]%
{\raggedright\footnotesize #1}}
\renewcommand*{\backref}[1]{}
\renewcommand*{\backrefalt}[4]{
  \ifcase #1 %
   [No citations.]%
  \or
   [#2]%
  \else
   [#2]%
  \fi
}
\newtheorem{theorem}{Theorem}[section]
\newtheorem{lemma}[theorem]{Lemma}
\newtheorem{corollary}[theorem]{Corollary}
\newtheorem{conjecture}[theorem]{Conjecture}
\newtheorem{proposition}[theorem]{Proposition}
\newtheorem*{namedtheorem}{\theoremname}
\newcommand{\theoremname}{testing}
\newenvironment{named}[1]{\renewcommand{\theoremname}{#1}\begin{namedtheorem}}{\end{namedtheorem}}
\theoremstyle{definition}
\newtheorem{define}[theorem]{Definition}
\newtheorem{example}[theorem]{Example}
\newcommand{\ZZ}{\mathbb{Z}} 
\newcommand{\QQ}{\mathbb{Q}}
\newcommand{\CC}{\mathbb{C}} 
\newcommand{\RR}{\mathbb{R}}
\newcommand{\FF}{\mathcal{F}}
\renewcommand{\setminus}{{\smallsetminus}}
\newcommand{\bdy}{\partial}
\newcommand{\lmin}{\ell_{\rm{min}}}
\def\OO{\mathcal{O}} 
\def\PP{\mathcal{P}}
\def\QQ{\mathcal{Q}}
\newcommand{\MO}{M_{\OO}} 
\newcommand{\OK}{\OO_K}
\newcommand{\OL}{\OO_L}
\newcommand{\Omin}{\OO_{\rm{min}}}
\def\vol{\mathrm{Vol}}
\def\mod{\mbox{mod}\ }
\def\HH{\mathbb{H}}
\def\QQ{\mathcal{Q}}
\def\FF{\mathcal{F}}
\begin{document}

\title{Small volume link orbifolds}
\author{Christopher K. Atkinson}
\address{Department of Mathematics, 
University of Minnesota,
Morris, MN 56267}
\email{catkinso@morris.umn.edu}

\author{David Futer}
\address{Department of Mathematics, Temple University,
Philadelphia, PA 19122}
\email{dfuter@temple.edu}
\thanks{{Futer is supported in part by NSF grant DMS--1007221.}}


\begin{abstract} 
This paper proves lower bounds on the volume of a hyperbolic
$3$--orbifold whose singular locus is a link. We identify the unique smallest
volume orbifold whose singular locus is a knot or link in the $3$--sphere, or
more generally in a $\ZZ_6$ homology sphere. We also prove more general lower
bounds under mild homological hypotheses.  
\end{abstract}

\maketitle

\section{Introduction}

The volume of a hyperbolic manifold is one of its most natural geometric invariants, and one that has ben the subject of intensive study.
The work of J{\o}rgensen and Thurston in the 1970s established that the set of volumes of 
hyperbolic $3$--manifolds is a closed, non-discrete, well-ordered subset of $\RR$. Dunbar and Meyerhoff proved that the same property holds if one considers the quotients of hyperbolic manifolds by a discrete group action, namely orbifolds \cite{dunbar-meyerhoff}. One important consequence of this work is that every infinite family of hyperbolic manifolds or orbifolds contains a finite number that realize the smallest volume.

The last decade has witnessed considerable progress on determining the volume minimizers within various families. For instance, Cao and Meyerhoff identified the two smallest non-compact, orientable hyperbolic $3$--manifolds \cite{cao-meyerhoff}. More recently, Gabai, Meyerhoff and Milley have identified the smallest closed orientable $3$--manifold  \cite{moms, milley}. Their theorem is worth stating in full, since we will use it below.

\begin{theorem}[Gabai--Meyerhoff--Milley]\label{thm:weeks}
The Weeks manifold $M_W$ is the unique closed, orientable, hyperbolic $3$--manifold
of minimal volume.  Its volume is $\vol(M_W) = 0.9427...$.
\end{theorem}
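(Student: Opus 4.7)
The plan is to follow the strategy of Gabai--Meyerhoff--Thurston, completed by Milley, which reduces the minimization problem to a finite case analysis via two geometric inputs: a tube radius estimate and the ``Mom technology''. The guiding principle is that a closed hyperbolic $3$--manifold $M$ of very small volume is forced to contain a short geodesic $\gamma$ with a large embedded tube, so that the drilling $N = M \setminus \gamma$ is a cusped hyperbolic manifold of controlled geometric complexity. Once $N$ is pinned down to lie on an explicit finite list, the closed manifold $M$ can be recovered as a hyperbolic Dehn filling, and all candidate fillings of small enough volume can be inspected one by one.

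First I would establish a quantitative thick--thin dichotomy: any hyperbolic $M$ with $\vol(M)$ slightly above $\vol(\Weeks)$ must contain a geodesic $\gamma$ whose embedded tube radius exceeds an explicit threshold (close to $\tfrac{1}{2}\log 3$), obtained from Meyerhoff's tube radius bound combined with improvements by Gabai--Meyerhoff--Milley. Drilling $\gamma$ produces a cusped hyperbolic manifold $N$, and by Hodgson--Kerckhoff style drilling estimates one has $\vol(N) < V_1$ for a computable $V_1$. The crucial structural step is then to show that $N$ admits an \emph{internal Mom-$n$ structure} with $n \le 3$, i.e.\ a decomposition of $N$ into a cusp neighborhood, together with at most three $1$--handles and three $2$--handles of valence $2$ or $3$. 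This is proven by analyzing the pattern of embedded horoballs in the universal cover: the small volume forces many short arcs and annuli in the thin part, which are assembled combinatorially into a low-complexity handle structure.

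Next, I would invoke the classification: the hyperbolic manifolds that arise as internal Mom-$n$ manifolds for $n \le 3$ form an explicit finite list (essentially the Whitehead link complement, the magic manifold, and a handful of related examples). For each such $N$, the shortest $\gamma$ in the original $M$ becomes a short meridional slope on one of the cusps, and the cores of the other cusps can be filled arbitrarily. Using the $2\pi$--theorem together with Futer--Kalfagianni--Purcell type volume bounds (or equivalently the Hodgson--Kerckhoff cone deformation estimates), only finitely many Dehn filling slopes on each cusp can produce a closed manifold of volume less than $\vol(\Weeks) + \epsilon$. For each of these finitely many candidates one evaluates the volume explicitly and checks that no filling beats or ties $\vol(\Weeks) = 0.9427\ldots$, with the Weeks manifold itself arising as a specific filling on the chain link complement.

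The main obstacle, by a wide margin, is the Mom-decomposition step. Producing a Mom-$n$ structure of low complexity inside a small-volume cusped manifold requires an intricate analysis of horoball packings, together with a case-by-case elimination of many combinatorial configurations that \emph{could} appear but in fact do not; this is where the bulk of the Gabai--Meyerhoff--Milley work is concentrated. In comparison, the initial tube radius estimate uses essentially classical Margulis lemma techniques, and the final enumeration of Dehn fillings is a rigorous but largely mechanical computation on a known finite list of cusped manifolds.
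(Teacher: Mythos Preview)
The paper does not prove this theorem at all: it is stated as a cited result of Gabai, Meyerhoff, and Milley \cite{moms, milley}, and is used as a black box throughout the rest of the argument. There is therefore no ``paper's own proof'' to compare against. Your proposal is a reasonable high-level sketch of the original Gabai--Meyerhoff--Milley argument (short geodesic with large tube, drill to a cusped manifold, Mom-$n$ classification, finite Dehn filling check), and in broad outline it is correct; but none of that work is reproduced in this paper, nor is it expected to be. If your task was to supply a proof matching the paper's, the right answer is simply that the paper cites the result without proof.
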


One concrete description of the Weeks manifold $M_W$ is that it is the three-fold cyclic branched cover of $S^3$, branched over the $5_2$ knot depicted in Figure \ref{fig:knots}, left.

In this paper, we study the corresponding problem for \emph{orbifolds}. For simplicity, we will implicitly assume that all $3$--manifolds and $3$--orbifolds mentioned are orientable.

\begin{define}\label{def:orbifold}
A $3$--dimensional (orientable) \emph{orbifold} $\OO$ is 
locally modeled on $B^3 / G$, where $B^3$ is the closed $3$--ball and $G \subset SO(3)$ is a finite group acting on $B^3$ by rotations. The underlying topological space or \emph{base space} is denoted $X_\OO$. If a point $x \in B^3$ is fixed by a non-trivial element of $G$, the quotient of this point in $X_\OO$ belongs to the \emph{singular locus} $\Sigma_\OO$.

Combinatorially, the singular locus $\Sigma_\OO$ is a graph, whose vertices in the interior of $X_\OO$ have valence $3$ and whose vertices on $\bdy X_\OO$ have valence $1$. The neighborhood of an interior point of an edge of $\Sigma_\OO$ is the quotient of $B^3$ under a cyclic group $\ZZ_n$. We call the integer $n \geq 2 $ the \emph{torsion order} of the edge, and decorate the edge with $n$. The neighborhood of a trivalent vertex of $\Sigma_\OO$ is the quotient of $B^3$ under a spherical triangle group.

A $3$--orbifold $\OO$ is called \emph{hyperbolic} if $\OO = \HH^3 / \Gamma$, where $\Gamma = \pi_1(\OO)$ is a discrete group of isometries, possibly with torsion. By analogy with the manifold setting, $\Gamma$ is called the \emph{fundamental group} of $\OO$. In this setting, the singular locus $\Sigma_\OO$ is covered by the fixed points of torsion elements of $\Gamma$.
\end{define}

The question of finding volume minimizers also makes sense within the family of hyperbolic orbifolds. In a long series of papers, culminating in \cite{gehring-martin:minimal-orbifold, marshall-martin:minimal-orbifold2}, Gehring, Marshall, and Martin have determined the unique smallest--volume (orientable) hyperbolic $3$--orbifold. Its volume is approximately $0.03905$. One consequence of this theorem is that a hyperbolic $3$--manifold with given volume $V$ must have a symmetry group $G$ of order at most $|G| \leq V / 0.039$.

In this paper, we focus on orbifolds whose singular locus contains no vertices. 

\begin{define}\label{def:link-orbifold}
A $3$--orbifold $\OO$ is called a \emph{link orbifold} if its singular locus is a link in $X_\OO$. In other words, the singular locus $\Sigma_\OO$ is a disjoint union of closed curves. 
\end{define}

If $\OO$ is a hyperbolic link orbifold, the lift of $\Sigma_\OO$ to $\HH^3$ consists of a disjoint union of hyperbolic lines. In their work, Gehring, Marshall, and Martin called these hyperbolic lines \emph{simple axes}. Estimating volume in the presence of a simple axis was a crucial and difficult component of their program to identify the lowest--volume orbifold \cite{gehring-martin:minimal-orbifold, marshall-martin:minimal-orbifold2}. They managed to show that a link orbifold must have volume at least $0.041$. Although this was sufficient for their purposes, Gehring and Martin speculated that link orbifolds should have substantially higher volume \cite[Page 124]{gehring-martin:minimal-orbifold}.


\begin{figure}
\labellist
\small\hair 2pt
\pinlabel {$3$} [br] at 10 82
\pinlabel {$2$} [r] at 116 53
\pinlabel {$3$} [br] at 140 82
\endlabellist
\scalebox{1}{\includegraphics{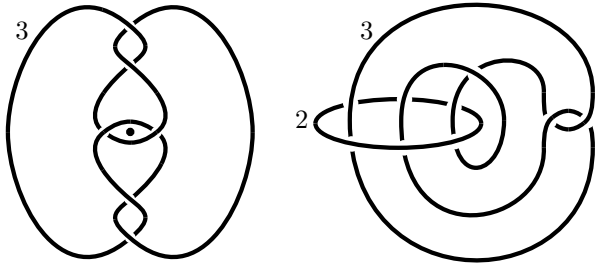}}
\caption{Left: the link orbifold $\OO_K$ with singular locus the $5_2$ knot. Right: the link orbifold $\OO_{L}$ is a two-fold quotient of $\OO_K$, by a rotation about the center. By Theorem \ref{thm:knots-hom}, $\OL$ minimizes volume among link orbifolds in $S^3$.}
\label{fig:knots}
\end{figure}

Based on computer experimentation and the results mentioned below, we conjecture that the smallest--volume link orbifold indeed has volume much larger than 0.041.

\begin{conjecture}\label{conj:minimal-link}
The link orbifold $\OL$ depicted in Figure \ref{fig:knots}, right, is the unique hyperbolic link orbifold of minimal volume. Its volume is $0.1571...$.
\end{conjecture}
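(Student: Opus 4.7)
The plan is a two-sided argument. The upper bound is immediate: the orbifold $\OL$ is the quotient of the Weeks manifold $\Weeks$ by an orientation-preserving $\ZZ_6$-action (obtained by combining the $\ZZ_3$ deck transformation of the branched cover $\Weeks \to (S^3, 5_2)$ with a $\ZZ_2$ symmetry of $\OO_K$ as pictured in Figure~\ref{fig:knots}), so $\vol(\OL) = \vol(\Weeks)/6 = 0.1571\ldots$ by Theorem~\ref{thm:weeks}. The substance of the conjecture is to match this from below and to argue uniqueness.

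For the lower bound, let $\OO = \HH^3/\Gamma$ be any hyperbolic link orbifold. Selberg's lemma produces a torsion-free finite-index normal subgroup $\Gamma' \triangleleft \Gamma$, yielding a hyperbolic manifold cover $M$ of some degree $d$ with $\vol(M) = d\,\vol(\OO)$. If $\OO$ is closed then $M$ is closed and Theorem~\ref{thm:weeks} gives $\vol(\OO) \ge \vol(\Weeks)/d$; in the cusped case the Cao--Meyerhoff bound of $2.0299\ldots$ is strictly stronger and never binding. The conjecture thus reduces to showing $d \le 6$ for every link orbifold, with equality forcing $\OO = \OL$. When equality holds, the uniqueness clause of Theorem~\ref{thm:weeks} forces $M = \Weeks$, and the rigidity step is to enumerate orientation-preserving $\ZZ_6$-subgroups of $\Isom^+(\Weeks)$ whose fixed-point set is a link (rather than a graph with vertices) and check that each such quotient is $\OL$; this is a finite computation with the known isometry group of $\Weeks$.

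The obstruction to bounding $d$ by $6$ divides naturally into two regimes. In the \emph{low-torsion regime}, where the torsion orders $n_i$ on the components of $\Sigma_\OO$ satisfy $\mathrm{lcm}(n_i) \le 6$, the aim is to produce a surjection $\pi_1(\OO) \twoheadrightarrow G$ with $|G|\le 6$ whose kernel acts freely on $\HH^3$. Theorem~\ref{thm:knots-hom} accomplishes this when $X_\OO$ is a $\ZZ_6$-homology sphere; for general $X_\OO$, one should first pass to a tailored finite manifold cover of the base space to achieve the needed homology and then descend the cover argument. The \emph{high-torsion regime}---where some $n_i \ge 7$ or $\mathrm{lcm}(n_i) > 6$---is the main obstacle: here the estimate $\vol(\Weeks)/d$ falls below $\vol(\OL)$, so the cover approach fails. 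The plan is to exploit small cone angles directly: a singular axis of order $n$ carries an embedded equidistant tube whose radius should be controllable via Hodgson--Kerckhoff rigidity and Margulis--Meyerhoff packing, with the tube volume alone exceeding $\vol(\OL)$. The quantitative step is the hardest part: the Gehring--Marshall--Martin simple-axis bound of $0.041$ is short of the target by a factor of roughly four, so significantly sharper tube-volume estimates---or a combined drilling-and-comparison argument in the spirit of the Gabai--Meyerhoff--Milley proof of Theorem~\ref{thm:weeks} itself---will be required to close the gap.
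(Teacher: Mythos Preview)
This statement is a \emph{conjecture}; the paper does not prove it, and your proposal is (appropriately) a strategy outline rather than a proof. That said, there are two places where your plan diverges from the paper's partial progress in ways worth noting.

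First, your low-torsion regime has a real gap. You correctly observe that Theorem~\ref{thm:knots-hom} handles the case $H_1(X_\OO;\ZZ_6)=0$, but your proposed fix for general $X_\OO$---``pass to a tailored finite manifold cover of the base space to achieve the needed homology''---defeats its own purpose. Any such passage multiplies the eventual covering degree, so you lose the bound $d\le 6$ that the whole argument rests on. Concretely, there exist link orbifolds with only $2$-- and $3$--torsion whose smallest manifold cover has degree strictly greater than $6$, because the meridians are not jointly null-homologous in the relevant sense (cf.\ Lemma~\ref{lemma:branched-cover}). The paper does not resolve this either; its best unconditional bound in this direction is Theorem~\ref{thm:2torsion-nullhom}, which gives $0.1185$ rather than $0.1571$, and even that requires the $2$--torsion locus to be $\ZZ_2$--null-homologous.

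Second, for the high-torsion regime the paper uses a cleaner device than direct tube-volume estimates: the \emph{reduction} of Definition~\ref{def:reduction}, which relabels every component of torsion order $\ge 3$ by $3$ and invokes the Schl\"afli formula to show volume only decreases (Proposition~\ref{prop:reduction}). This collapses torsion orders $4,5,6$ into the low-torsion regime at no cost, and Lemma~\ref{lemma:7estimate} handles $p\ge 7$ by the drilling estimate you allude to. So rather than splitting by $\mathrm{lcm}(n_i)\le 6$, the paper shows (Theorem~\ref{thm:restrictions}) that the minimizer can be assumed to have \emph{only} torsion orders $2$ and $3$. Your tube-volume program is still needed in spirit---it reappears as the drill-and-fill argument of Section~\ref{s:drill-fill}---but its role is to bound volume when no small manifold cover is available, not to handle high torsion per se. The genuine obstruction to the conjecture, in both your outline and the paper's, is the absence of homological control on $\Sigma_\OO$ when the base space is not a $\ZZ_6$--homology sphere.
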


Both of the orbifolds depicted in Figure \ref{fig:knots} are quotients of the Weeks manifold $M_W$. Recall that $M_W$ is a three-fold branched cover of $S^3$ branched over the $5_2$ knot, which is the singular locus of $\OK$. This is equivalent to saying that $M_W$ is a three-fold cyclic orbifold cover of $\OK$. Similarly, Figure \ref{fig:knots} illustrates that $\OK$ is a two-fold orbifold cover of $\OL$. Therefore,
$$6 \cdot \vol(\OL) \: = \: 3 \cdot \vol(\OK) \: = \: \vol(M_W) \: = \: 0.9427...$$

This paper contains several results in the direction of Conjecture \ref{conj:minimal-link}. First, we prove that $\OK$ is minimal among knot orbifolds, and $\OL$ among link orbifolds, if the base space is a homology $3$--sphere.

\begin{theorem}\label{thm:knots-hom}
Let $\OO$ be a hyperbolic  link orbifold whose base space has $H_1( X_\OO; \ZZ_6) = 0$.
\begin{enumerate}
\item If the singular locus $\Sigma_\OO$ is a knot, then $\vol(\OO) \geq 0.31423...$, with equality if and only if $\OO = \OK$.
\item If the singular locus $\Sigma_\OO$ is a link with multiple components, then $\vol(\OO) \geq 0.15711...$, with equality if
and only if $\OO = \OL$.
\end{enumerate}
\end{theorem}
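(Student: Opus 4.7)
My plan has two parts: construct a manifold cover of bounded degree and apply Theorem~\ref{thm:weeks}, handling large torsion orders separately via the orbifold Schl\"afli formula. The hypothesis $H_1(X_\OO;\ZZ_6)=0$ forces $H_1(X_\OO;\ZZ)=:T$ to be finite of order coprime to $6$, so $H_2(X_\OO;\ZZ)=0$ by Poincar\'e duality. The long exact sequence of the pair $(X_\OO, X_\OO\setminus\Sigma_\OO)$ together with the Thom isomorphism then gives $0\to \ZZ^k\to H_1(X_\OO\setminus\Sigma_\OO;\ZZ)\to T\to 0$ with the $\ZZ^k$ factor generated by meridians $m_1,\ldots,m_k$.

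For the sharp cases---a knot with torsion $3$, or a two-component link with torsions $(2,3)$---the abelianization $H_1(\OO;\ZZ)$ is an extension of $T$ by $\bigoplus \ZZ_{n_i}$; since $|T|$ is coprime to $6$, Schur--Zassenhaus splits the extension as $\bigoplus \ZZ_{n_i}\oplus T$. Projecting onto $\ZZ_d$ via $m_i\mapsto d/n_i$ (with $d=3$ or $d=6$ respectively) yields a homomorphism $\pi_1(\OO)\to \ZZ_d$ on which each meridian has order exactly $n_i$; the corresponding cyclic $d$-fold cover $M\to\OO$ is a manifold. By Theorem~\ref{thm:weeks}, $\vol(M)\geq \vol(M_W)$, hence $\vol(\OO)\geq \vol(M_W)/d$, with equality iff $M\cong M_W$. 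An analogous construction handles other small configurations (knot with $n_1=2$, or links with $\mathrm{lcm}(n_i)\leq 6$ not of the form $(2,3)$), giving at least the claimed bound---with equality precluded either by a strictly better cover bound, or by the singular locus of the would-be quotient $M_W/\ZZ_d$ not matching the hypothesized $(n_i)$.

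For the remaining orbifolds---knot with $n_1\geq 4$, link with $\mathrm{lcm}(n_i)>6$---I would invoke the orbifold Schl\"afli formula $dV=-\tfrac{1}{2}\sum\ell_i\,d\alpha_i$: along any smooth one-parameter family of hyperbolic cone structures, decreasing each $n_i$ (increasing the cone angle $\alpha_i=2\pi/n_i$) strictly decreases volume. If $\OO$ deforms through hyperbolic cone orbifolds to the reduced target $\OO'$ with torsion $(3)$ or $(2,3)$, then $\vol(\OO)>\vol(\OO')\geq \vol(M_W)/d$, giving the required strict inequality. Finally, for the equality cases, $M\cong M_W$ forces $\OO$ to be a cyclic quotient of $M_W$ of the appropriate order; the explicit description of $\Isom^+(M_W)$ then identifies $\OK$ and $\OL$ as the unique such quotients with knot or link fixed set.

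The principal obstacle is justifying the Schl\"afli reduction: the interpolating family may exit the hyperbolic locus before reaching $\OO'$ if the reduced structure is Seifert-fibered, toroidal, or otherwise non-hyperbolic. Thurston's orbifold geometrization theorem classifies these degenerations, and each must be excluded using the homological hypothesis on $X_\OO$ and the tube-radius techniques developed by Gehring--Martin and Marshall--Martin for simple axes. This case analysis, combined with the detailed study of $\Isom^+(M_W)$ needed for the equality argument, is where the technical heart of the proof will reside.
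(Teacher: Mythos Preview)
Your overall strategy---reduce torsion orders via the Schl\"afli formula, then construct a manifold cover of degree dividing $6$ and invoke Theorem~\ref{thm:weeks}---is exactly the paper's approach. Your covering construction is essentially the content of the paper's Lemma~\ref{lemma:branched-cover}, and your single $\ZZ_6$ cover in the mixed $(2,3)$ link case is a clean variant of the paper's two-step construction (first a $\ZZ_2$ branched cover over the $2$--torsion locus, then a $\ZZ_3$ cover of the resulting orbifold $\PP$, with the mod-$3$ bounding $2$--chain lifted explicitly to verify the hypothesis of Theorem~\ref{thm:3null} upstairs).

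The genuine gap is in your proposed handling of the Schl\"afli reduction. You suggest that when the reduced orbifold $\OO'$ fails to be hyperbolic, the degenerate cases can be ``excluded'' via the homological hypothesis on $X_\OO$ together with Gehring--Martin tube-radius estimates. Neither tool does the job. The hypothesis $H_1(X_\OO;\ZZ_6)=0$ is satisfied by $X_\OO=S^3$, and the non-hyperbolic reduced orbifolds that actually arise (Figure~\ref{fig:dunbar}) have base space $S^3$---so homology excludes nothing here. Tube-radius bounds control embedded collars in a given hyperbolic structure; they say nothing once the structure has collapsed to Euclidean or an essential turnover has appeared. The paper handles these cases with entirely different ingredients: Dunbar's explicit enumeration of geometric-but-not-hyperbolic link orbifolds, which pins $\OO'$ down to two Euclidean examples for which one computes $\vol(\OO)\ge v_3/4$ directly; and, when $\OO'$ is not geometric, the observation that $\OO$ must then contain a totally geodesic hyperbolic turnover, to which the Atkinson--Rafalski bound applies and yields $\vol(\OO)\ge 0.44$ in the absence of $2$--torsion. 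This is packaged as Proposition~\ref{prop:reduce-no2}, and it is where the real technical work lives---but the tools are not the ones you anticipated. (A minor additional omission: your homological setup implicitly assumes $X_\OO$ is closed; the paper disposes of the cusped case at the outset via Lemma~\ref{lemma:no-cusps}.)
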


In particular, $\OK$ is the smallest knot orbifold in $S^3$, and $\OL$ is the smallest link orbifold.

In fact, the hypotheses of Theorem \ref{thm:knots-hom} can be relaxed somewhat: instead of requiring that $X_\OO$ is a homology sphere, it suffices to require that each component of $\Sigma_\OO$ is  trivial in $H_1(X_\OO, \ZZ_6)$. See, for instance, Theorem \ref{thm:3null} in Section \ref{sec:null-hom}. On the other hand, some version of the homological hypotheses is crucial to our line of argument.

By relaxing the homological hypotheses even further, we obtain the following result, which is more general than Theorem \ref{thm:knots-hom} but produces a weaker volume estimate.

\begin{theorem}\label{thm:2torsion-nullhom}
If $\OO$ is a hyperbolic link orbifold such that the locus of $2$--torsion is trivial in $H_1(X_\OO,  \ZZ_2$), then
$\vol(\OO) \geq 0.1185$. If the locus of $2$--torsion is empty, then $\vol(\OO) \geq 0.2371$.
\end{theorem}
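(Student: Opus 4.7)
The plan is to first establish the stronger bound $\vol(\OO) \geq 0.2371$ when the singular locus of $\OO$ contains no $2$-torsion, and then deduce the weaker bound $\vol(\OO) \geq 0.1185$ from this by passing to a double cover.

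\textbf{Doubling.} Let $\Sigma_2 \subset \Sigma_\OO$ denote the sublink of order-$2$ edges. The hypothesis $[\Sigma_2] = 0 \in H_1(X_\OO; \ZZ_2)$ is equivalent to the existence of a $2$-fold cyclic cover $\wt X \to X_\OO$ branched over $\Sigma_2$, and this branched cover lifts to a $2$-fold orbifold cover $\wt\OO \to \OO$. In $\wt\OO$ the edges of $\Sigma_2$ are no longer singular (the $2$-fold branching resolves the order-$2$ torsion), while each component of $\Sigma_\OO \setminus \Sigma_2$ lifts, typically splitting into two components, with its torsion order $\geq 3$ preserved. Thus $\wt\OO$ is a hyperbolic link orbifold with no $2$-torsion, and $\vol(\wt\OO) = 2 \vol(\OO)$; applying the first inequality to $\wt\OO$ yields $\vol(\OO) \geq 0.1185$.

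\textbf{Main bound.} Assume now that every edge of $\Sigma_\OO$ has torsion order $n \geq 3$. Pick any singular component $\gamma \subset \Sigma_\OO$ of torsion order $n$ and length $\ell$, and let $T$ be the maximal embedded tube about $\gamma$, of radius $R$ and cone angle $2\pi/n \leq 2\pi/3$. Its volume is
\[
\vol(T) = \frac{\pi \ell \sinh^2(R)}{n}.
\]
Since $\vol(\OO) \geq \vol(T)$, it suffices to produce a universal lower bound $\vol(T) \geq 0.2371$ valid for all such $\gamma$. Such a bound is precisely a Margulis-type tube estimate for simple axes of cone angle $\leq 2\pi/3$, in the spirit of the work of Gehring-Marshall-Martin \cite{gehring-martin:minimal-orbifold, marshall-martin:minimal-orbifold2}: forbidding order-$2$ torsion rules out near-collisions of the lift of $\gamma$ with its axis conjugates in $\HH^3$, yielding an effective lower bound on the combination $\ell \sinh^2(R)/n$.

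\textbf{Main obstacle.} The core difficulty is establishing the universal tube estimate under the sole hypothesis of no $2$-torsion. The absence of order-$2$ singular axes is essential: a $\pi$-rotation admits arbitrarily small perturbations by an independent involution, so no Margulis constant can be uniform in the presence of order-$2$ torsion; rotations of order $\geq 3$, by contrast, obey effective ``Log $3$''-type separation inequalities that force a definite tube radius. The explicit constant $0.2371$ then arises from a constrained optimization over the tube parameters $(R, \ell, n)$ with $n \geq 3$, and identifying and verifying the extremal configuration is where the bulk of the technical work lies.
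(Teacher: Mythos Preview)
Your doubling step is essentially correct and matches the paper's reduction: pass to a $2$--fold branched cover over $\Sigma_2$ to kill the $2$--torsion, then quote the no--$2$--torsion bound. (One small omission: you should first note that $X_\OO$ is closed, e.g.\ via Lemma~\ref{lemma:no-cusps}, since the branched--cover construction of Lemma~\ref{lemma:branched-cover} is stated for closed base spaces.)

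The ``main bound'' step, however, is a genuine gap. You reduce the entire content of the theorem to the assertion that, for a simple elliptic axis of order $n\geq 3$, the maximal tube satisfies $\vol(T)=\tfrac{\pi}{n}\,\ell\sinh^2(R)\geq 0.2371$, and you attribute this to ``a Margulis--type tube estimate \ldots\ in the spirit of Gehring--Marshall--Martin.'' No such estimate with this constant appears in that body of work, and you do not supply one. The Gehring--Martin results give lower bounds on the \emph{collar radius} $R$ (for $n=3$ one has only $R\gtrsim 0.245$, realized by $\OK$), not on the product $\ell\sinh^2(R)/n$; to reach $0.2371$ from a tube about a single order--$3$ axis in $\OK$ you would need $\ell\gtrsim 3.7$, and nothing you have said controls $\ell$ from below. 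Your final paragraph concedes that ``identifying and verifying the extremal configuration is where the bulk of the technical work lies,'' but that \emph{is} the theorem: without carrying out that optimization---or explaining why it terminates at $0.2371$---you have only restated the problem.

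The paper's proof is entirely different and does not proceed via a tube--volume inequality. After reducing (via Proposition~\ref{prop:reduce-no2}) to the case where all torsion is exactly $3$, it drills out $\Sigma_\OO$ to obtain a cusped \emph{manifold} $\MO$, bounds $\vol(\MO)/\vol(\OO)$ using the Agol--Dunfield drilling estimate (Proposition~\ref{drillbound}) together with the collar--radius bound, invokes the Gabai--Meyerhoff--Milley enumeration (Theorem~\ref{thm:moms}) to reduce to ten explicit census manifolds when $\vol(\MO)<2.848$, and then rigorously checks all short $3$--torsion fillings of those ten manifolds by computer (Proposition~\ref{prop:moser-search}). A separate computer--assisted argument (Theorem~\ref{smalltube}) handles the finitely many orbifolds whose collar radius is too small for the drilling estimate to be useful. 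None of this is captured by a single tube--volume inequality.
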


One consequence of Theorem \ref{thm:2torsion-nullhom}, combined with Theorem \ref{thm:restrictions} in Section \ref{sec:reduction}, is the following partial answer to Conjecture \ref{conj:minimal-link}.

\begin{corollary}\label{cor:minimal23}
The smallest volume hyperbolic link orbifold must have $2$--torsion, may or may not have $3$--torsion, and has no $p$--torsion for $p > 3$.
\end{corollary}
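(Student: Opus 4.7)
The plan is to combine the explicit upper bound furnished by $\OL$ with the lower bounds of Theorem \ref{thm:2torsion-nullhom} and the torsion-order restrictions promised by Theorem \ref{thm:restrictions}. Since $\OL$ is itself a hyperbolic link orbifold, any volume-minimizing link orbifold $\OO$ satisfies
\[
\vol(\OO) \;\leq\; \vol(\OL) \;=\; 0.1571\ldots \;<\; 0.2371.
\]
The second inequality of Theorem \ref{thm:2torsion-nullhom}---which says that a link orbifold whose $2$--torsion locus is empty has volume at least $0.2371$---then immediately rules out the possibility that $\OO$ has no $2$--torsion, which establishes the first clause of the corollary.

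For the clause that $\OO$ has no $p$--torsion with $p > 3$, I would invoke Theorem \ref{thm:restrictions} of Section \ref{sec:reduction}. Although the exact statement is not quoted in the excerpt, a theorem of this flavor should restrict which edge labels may appear on the singular locus once the total volume is constrained to lie below an explicit threshold. The mechanism is geometric: a closed geodesic singular axis of torsion order $p$ carries a Margulis-type tube whose embedded radius, and hence volume contribution, can be bounded from below by a quantity that grows with $p$, so a high-order singular component cannot coexist with a small total volume. Feeding in the budget $\vol(\OO) \leq 0.1571$ should eliminate every torsion order $p \geq 5$ and leave $\{2,3\}$ as the only admissible labels; this is the step I expect to be the main technical obstacle, as it rests entirely on Theorem \ref{thm:restrictions} rather than on anything elementary.

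Finally, the phrase ``may or may not have $3$--torsion'' is not itself a claim requiring proof, but an honest acknowledgement that the two inputs above leave the matter undecided: the conjectured minimizer $\OL$ does carry $3$--torsion, yet neither Theorem \ref{thm:2torsion-nullhom} nor Theorem \ref{thm:restrictions} forces this. Pushing the ``no $3$--torsion'' lower bound above $0.1571$---together with a uniqueness statement in the spirit of Theorem \ref{thm:knots-hom}---is precisely what would be needed to upgrade Corollary \ref{cor:minimal23} to the full Conjecture \ref{conj:minimal-link}, and explains why the corollary stops short of that statement.
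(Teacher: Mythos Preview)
Your proposal is correct and follows exactly the same two-step logic as the paper: Theorem~\ref{thm:restrictions} forces the minimizer's torsion orders into $\{2,3\}$, and the second clause of Theorem~\ref{thm:2torsion-nullhom} together with $\vol(\Omin)\leq\vol(\OL)<0.2371$ forces $2$--torsion to be present. One small correction: you write that Theorem~\ref{thm:restrictions} should ``eliminate every torsion order $p\geq 5$,'' but in fact it eliminates $p\geq 4$ outright (its conclusion is precisely that only $2$ and/or $3$ occur), so your speculation about the tube-volume mechanism is unnecessary---you can simply quote the theorem.
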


Finally, if we restrict to orbifolds without any $2$--torsion or $3$--torsion, the estimate becomes even larger, and we can also pinpoint the unique minimizer in this family.

\begin{theorem}\label{thm:4torsion}
Let $\OO$ be a hyperbolic link orbifold such that all torsion has order at least $4$. Then $\vol(\OO) \geq 0.5074...$, with equality if and only if $\OO$ is the figure--$8$ knot in $S^3$, labeled $4$.
\end{theorem}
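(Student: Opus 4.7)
My plan is to drill the singular locus to reduce to a cusped hyperbolic 3-manifold, apply the Cao--Meyerhoff lower bound on cusped volume, and then relate $\vol(\OO)$ back to the drilled volume via a cone-manifold deformation estimate. Set $N := X_\OO \setminus \Sigma_\OO$. The first step is to establish that $N$ carries a complete finite-volume hyperbolic metric. Because all cone angles of $\OO$ satisfy $\alpha_i = 2\pi/n_i \leq \pi/2 < \pi$, and $\OO$ is hyperbolic, hence irreducible and atoroidal as an orbifold, the manifold $N$ is irreducible and atoroidal: any essential sphere or torus would either descend to an essential suborbifold of $\OO$ or bound a singular solid torus (ruled out since $\alpha_i < \pi$). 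Perelman's Geometrization Theorem then supplies $N$ with a hyperbolic metric, and Cao--Meyerhoff yields
\[
\vol(N) \geq V_0 := 2.02988\ldots,
\]
with equality iff $N$ is the figure-$8$ knot complement or its sister $m003$. If $\Sigma_\OO$ has $k \geq 2$ components, then $N$ has multiple cusps and the multi-cusped analogue (Agol's bound) pushes $\vol(N)$ well above $V_0$.

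The orbifold $\OO$ is then recovered from $N$ by a cone-manifold Dehn filling at cone angles $\alpha_i \leq \pi/2$. Schl\"afli's variation formula expresses the volume deficit as
\[
\vol(N) - \vol(\OO) = \sum_i \int_0^{\alpha_i} \frac{\ell_i(\theta)}{2}\, d\theta,
\]
where $\ell_i(\theta)$ is the length of the $i$-th singular geodesic at intermediate cone angle $\theta$. To bound this deficit from above, I would invoke Hodgson--Kerckhoff's universal tube-radius and geodesic length estimates for cone angles $\leq \pi$, which provide uniform control on each $\ell_i$ throughout the deformation. Combined with the Cao--Meyerhoff lower bound on $\vol(N)$, this should yield $\vol(\OO) \geq 0.5074\ldots$.

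For equality, saturation of Cao--Meyerhoff forces $N$ to be the figure-$8$ complement or $m003$, saturation of the multi-cusp estimate forces $\Sigma_\OO$ to be a single knot, and saturation of the cone-deformation estimate forces every torsion order to equal exactly $4$. Ruling out $m003$ requires a direct numerical comparison of cone-manifold volumes at cone angle $\pi/2$, leaving $\OO$ as the figure-$8$ knot in $S^3$ labeled $4$. The main obstacle is producing a cone-deformation estimate sharp enough to recover the precise constant $0.5074\ldots$: a generic Hodgson--Kerckhoff application may fall short, so one likely needs either a drilling-volume inequality in the spirit of Futer--Kalfagianni--Purcell or an argument tailored to the specific cone angle $\pi/2$, in order both to establish the sharp inequality and to pin down the unique minimizer.
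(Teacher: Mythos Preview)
Your proposal has a genuine gap at the cone-deformation step. The Schl\"afli deficit
\[
\vol(N) - \vol(\OO) \: = \: \sum_i \int_0^{\alpha_i} \frac{\ell_i(\theta)}{2}\, d\theta
\]
admits no universal upper bound: the core length $\ell_i$ of a singular geodesic in $\OO$ can be arbitrarily large, and the Hodgson--Kerckhoff estimates control the tube radius and the \emph{variation} of $\ell_i$ along the deformation, not $\ell_i$ itself. So from $\vol(N) \geq V_0$ alone you cannot extract any lower bound on $\vol(\OO)$. The Futer--Kalfagianni--Purcell inequality you allude to at the end does give a multiplicative bound $\vol(\OO) \geq \bigl(1-(2\pi/\ell_{\min})^2\bigr)^{3/2}\vol(N)$, but only when the filling slope is long; for the actual minimizer (the $(4,0)$ filling of the figure--$8$ complement) the slope is short and this estimate is vacuous. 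Your equality analysis is also too optimistic: it is not enough to examine $N = {\tt m003}$ or ${\tt m004}$; you must rule out every cusped $N$ whose volume is only slightly above $V_0$ and which might admit a $4$--torsion filling of volume below $0.5074$.

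The paper's argument repairs both issues. First, it reduces (Lemma~\ref{lemma:reduce-4}) to torsion order exactly $4$, so that the Gehring--Martin collar estimate gives $r \geq 0.4157$; this feeds into the \emph{multiplicative} Agol--Dunfield drilling inequality (Proposition~\ref{drillbound}) to yield $\vol(\MO) \leq 5.271\,\vol(\OO)$, replacing your unbounded additive deficit. Second, rather than Cao--Meyerhoff, it invokes the Gabai--Meyerhoff--Milley enumeration (Theorem~\ref{thm:moms}) of \emph{all} cusped manifolds with volume at most $2.7$, reducing to six explicit census manifolds. Then Theorem~\ref{thm:fkp} cuts the problem down to finitely many short $4$--torsion slopes on these six manifolds, and a rigorous computer check of the resulting $57$ fillings both establishes the bound and identifies the unique minimizer. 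Your outline is missing the collar-radius input that makes drilling effective, and the finite enumeration plus computer verification that handles short slopes.
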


There are analogues of Theorem \ref{thm:4torsion} for link orbifolds whose torsion orders are bounded below by $n$, for any $n \geq 4$. We will explore these in future work.

\subsection*{Organization}

Here are the main steps in the proofs of Theorems \ref{thm:knots-hom}, \ref{thm:2torsion-nullhom}, and \ref{thm:4torsion}.

The first key step in the proof is to show that for most purposes, it suffices to restrict attention to orbifolds whose only torsion orders are $2$ and/or $3$. This is proved in Section \ref{sec:reduction}; see Theorem \ref{thm:restrictions} and Proposition \ref{prop:reduce-no2} for precise statements. To prove the simple--seeming statement of Theorem \ref{thm:restrictions}, we draw on a number of results and techniques, including the orbifold geometrization theorem  \cite{blp:orbifold, chk:orbifold}, Dunbar's classification of Euclidean orbifolds  \cite{dunbar}, collar estimates due to Gehring, Marshall, and Martin \cite{gmm:axial-distances, gehring-martin:commutators-collars}, and volume estimates for totally geodesic boundary due to Atkinson and Rafalski \cite{atkinson-rafalski}.

In Section \ref{sec:null-hom}, we explore the homological hypotheses that imply Theorem \ref{thm:knots-hom}. The main idea is described in Lemma \ref{lemma:branched-cover}: if the singular locus of $\OO$ has torsion order $n$ and is null-homologous $\mod n$, then $\OO$ has an $n$--fold manifold cover. Thus, once we know the only torsion orders are $2$ and/or $3$, we may take $2$-- and $3$--fold covers to obtain a hyperbolic manifold, whose volume must be at least $\vol(M_W) > 0.94$. After clearing some technical obstacles that arise when both torsion orders are present, this method proves Theorem \ref{thm:knots-hom}.

Section \ref{s:drill-fill} is devoted to proving Theorem \ref{thm:2torsion-nullhom}. The overall line of argument is inspired by the work of Gabai, Meyerhoff, and Milley \cite{moms, milley}: first, drill out the singular locus $\Sigma_\OO$, and then fill it back in. In ``generic'' circumstances, we have precise control over the change in volume under both operations. For drilling, this comes from the work of Agol and Dunfield \cite{ast} and for filling, from the work of Futer, Kalfagianni, and Purcell \cite{fkp:volume}.

To finish the proof of Theorem \ref{thm:2torsion-nullhom}, we must deal with a finite number of ``exceptional,'' non-generic situations. This means drilling a geodesic without a large embedded collar, or Dehn filling along a short slope. The verification of these finitely many (under 200) exceptional cases relies on rigorous computer assistance. For non-generic drilling, we use the program Tube \cite{snap-tube} and the work of Gehring, Machlachlan, Martin, and Reid \cite{GMMR}; see Appendix \ref{s:smalltube}. For short filling, we use Snap and the work of Milley \cite{milley} and Moser \cite{moser:thesis}; see Proposition \ref{prop:moser-search}. The auxiliary files attached to this paper \cite{af-data} provide details (code and output) for these rigorous searches.

Finally, Section \ref{s:4torsion} is devoted to the proof of Theorem \ref{thm:4torsion}. The line of proof is the same as in Theorem  \ref{thm:2torsion-nullhom}, but with fewer technical difficulties. First, we use an analysis similar to that of Section \ref{sec:reduction} to restrict attention to orbifolds whose only torsion order is $4$. Then, we drill and re-fill the singular locus, controlling volume via \cite{ast, fkp:volume}, and check finitely many cases by computer to finish the proof.

\subsection*{Acknowledgments}
We thank Marc Culler, 
Gaven Martin, 
Peter Shalen, and Genevieve Walsh for enlightening and helpful correspondence.
In addition, we are grateful to Nathan Dunfield, Craig Hodgson, Kerry Jones, Peter Milley, and Jessica Purcell for their assistance with the computational aspects of our project.

\section{Reduction to torsion orders $2$ and $3$}\label{sec:reduction}

The goal of this section is to restrict the possibilities for the smallest--volume link orbifold. We will show that the smallest such orbifold cannot have cusps; in other words, the base space is a closed $3$--manifold. In addition, we will show that for most purposes, it suffices to restrict attention to orbifolds whose only torsion orders are $2$ and/or $3$.

\begin{theorem}\label{thm:restrictions}
Let $\Omin$ be a hyperbolic link orbifold, whose volume is minimal among all link orbifolds. Then
\begin{enumerate}
\item\label{i:closed} The base space of $\Omin$ is a closed $3$--manifold.
\item\label{i:only23} The only possible torsion orders of $\Omin$ are $2$ and/or $3$.
\end{enumerate}
\end{theorem}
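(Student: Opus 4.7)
The plan is to handle parts (1) and (2) independently, in each case showing that if the conclusion fails then $\vol(\Omin) > \vol(\OL) \approx 0.157$, contradicting the minimality of $\Omin$.

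For part (1), any cusp of $\Omin$ has a Euclidean $2$-orbifold cross-section. Because $\Sigma_{\Omin}$ is a link in $X_{\Omin}$, hence a compact subset, no singular edge can escape to the cusp end; the cross-section therefore contains no cone points. Dunbar's classification of flat $2$-orbifolds then forces the cross-section to be a torus, and $\Omin$ is locally a manifold cusp. The standard lower bound on the volume of a maximal embedded horocusp (Meyerhoff, Adams) already exceeds $\vol(\OL)$, producing the desired contradiction.

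For part (2), suppose an edge $e \subset \Sigma_{\Omin}$ has torsion order $n \geq 4$, realized as a closed geodesic of length $\ell$. The element of $\pi_1(\Omin)$ stabilizing a lift of $e$ is elliptic of order $n$ and has a simple axis in $\HH^3$. The collar estimates of Gehring--Marshall--Martin give an embedded tube around $e$ of radius at least $R_n$, where $R_n$ is an explicit constant increasing in $n$. Quotienting a cylinder of length $\ell$ and radius $R_n$ by the order-$n$ rotation yields the tube-volume bound
\[
\vol(\Omin) \;\geq\; \frac{\pi \, \ell \, \sinh^2(R_n)}{n}.
\]
If $\ell$ exceeds a suitable threshold $\ell_n$, this immediately beats $\vol(\OL)$. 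In the remaining short-geodesic regime, one drills $e$ from $\Omin$; the orbifold geometrization theorem provides a hyperbolic structure on the drilled orbifold $\Omin^\circ$, which acquires an extra torus cusp. The Agol--Dunfield drilling inequality then gives
\[
\vol(\Omin) \;\geq\; (\tanh R_n)^3 \, \vol(\Omin^\circ),
\]
and one bounds $\vol(\Omin^\circ)$ from below either by the Cao--Meyerhoff bound for cusped hyperbolic $3$-manifolds (when drilling the entire singular locus produces a manifold) or, when $\Omin^\circ$ still has singular edges, by truncating the new cusp along a totally geodesic orbifold and applying the Atkinson--Rafalski estimate for volumes of hyperbolic $3$-orbifolds with totally geodesic boundary.

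The main obstacle is the borderline torsion orders $n \in \{4,5,6\}$, where $R_n$ is small enough that the tube estimate alone does not dominate and the drilling factor $(\tanh R_n)^3$ costs a nontrivial amount. In this range the threshold $\ell_n$ must be chosen carefully so that the long-geodesic and short-geodesic regimes together cover every possible $\ell$, and the Atkinson--Rafalski bound must be verified in the cases where $\Omin^\circ$ retains singular structure. For all larger $n$, $R_n$ grows rapidly enough that $\sinh^2(R_n)/n \to \infty$, and the tube bound alone handles every $\ell > 0$ without needing to drill.
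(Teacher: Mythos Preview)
Your treatment of part~(1) is essentially the paper's argument: a cusp of a link orbifold has torus cross-section, and the Meyerhoff/B\"or\"oczky horocusp estimate already forces $\vol(\Omin) \geq v_3/2 > \vol(\OL)$.

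Part~(2), however, has a genuine gap at the borderline orders you yourself flag. Take $n=4$. The Gehring--Martin collar bound is $R_4 \approx 0.4157$. Drilling the edge and using Agol--Dunfield together with the cusp bound $\vol(\Omin^\circ) \geq v_3/2$ gives only
\[
\vol(\Omin) \;\geq\; \frac{v_3/2}{\coth^3(2R_4)\bigl(1 + 0.91/\cosh(2R_4)\bigr)} \;\approx\; \frac{0.507}{5.27} \;\approx\; 0.096,
\]
which is well below $\vol(\OL)\approx 0.157$. Your tube estimate $\pi\ell\sinh^2(R_4)/4$ needs $\ell \gtrsim 1.09$ to reach $0.157$, so in the range $0 < \ell \lesssim 1.09$ neither bound suffices and the two regimes do not meet. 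The same failure occurs for $n=5,6$. Two further points: your claim that ``for all larger $n$ the tube bound alone handles every $\ell>0$'' is false, since $\pi\ell\sinh^2(R_n)/n \to 0$ as $\ell \to 0$ for every fixed $n$; and the Atkinson--Rafalski estimate applies to orbifolds with totally geodesic \emph{hyperbolic} $2$--orbifold boundary, whereas drilling produces a torus cusp whose horospherical cross-section is Euclidean --- there is no totally geodesic truncation.

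The paper's argument for part~(2) uses a completely different mechanism. One defines a \emph{reduced orbifold} $\OO'$ by relabeling every torsion order $p \geq 3$ to $3$, keeping the $2$--labels. Orbifold geometrization says $\OO'$ is either hyperbolic, Seifert-fibered/solv, or has a nontrivial geometric decomposition. In the hyperbolic case, Kojima's cone-deformation theorem and the Schl\"afli formula give $\vol(\Omin) \geq \vol(\Omin')$ with equality iff $\Omin = \Omin'$; minimality of $\Omin$ then forces equality, hence all labels are already $2$ or $3$. The non-hyperbolic cases are shown (via Dunbar's classification and, for essential turnovers, the Atkinson--Rafalski bound applied to the totally geodesic turnover one cuts along) to force $\vol(\OO) \geq 0.1658$, which exceeds $\vol(\OL)$ and hence cannot occur for $\Omin$. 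The drilling inequality is used only for the auxiliary Lemma covering $p \geq 7$, where the collar is large enough that $v_3/2$ divided by the drilling factor does exceed $0.1658$; for $p \in \{4,5,6\}$ the Schl\"afli reduction is what actually carries the proof.
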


The proof of Theorem \ref{thm:restrictions} relies on several lemmas. As a quick preview of the argument, we note that $\vol(\Omin) \leq \vol(\OL) = 0.1571...$, where $\OL$ is the link orbifold of Figure \ref{fig:knots}. With this knowledge, Lemma \ref{lemma:no-cusps} implies that the base space of $\Omin$ is closed, and Proposition \ref{prop:reduction} implies the conclusion about torsion orders.

\begin{lemma}\label{lemma:no-cusps}
Let $\OO$ be a hyperbolic link orbifold whose base space is not closed. Then $\vol(\OO) \geq v_3 / 2$, where $v_3 = 1.0149...$ is the volume of a regular ideal tetrahedron. 
%
\end{lemma}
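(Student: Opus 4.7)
The hypothesis that $X_\OO$ is not closed, combined with the orientability conditions in Definition \ref{def:orbifold} (which preclude totally geodesic boundary, since local groups lie in $\SO(3)$), forces $\OO$ to have at least one cusp end. Because $\Sigma_\OO$ is a link --- a disjoint union of closed curves in $X_\OO$ --- no component can reach any cusp, so every cusp cross-section of $\OO$ is a Euclidean torus without orbifold points.

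The plan is to bound $\vol(\OO)$ from below by exhibiting a sufficiently large embedded Euclidean cusp neighborhood. Lift a chosen cusp of $\OO$ to $\infty \in \partial \HH^3$ and denote its parabolic stabilizer by $P \cong \ZZ^2$, acting on horizontal planes by translations. Shrink the horoball $B_t = \{(x,y,z) : z \geq t\}$ until $B_t$ is maximal among horoballs that project to an embedded product cusp $T^2 \times [0,\infty)$ in $\OO$; this requires $B_t$ to be disjoint from $gB_t$ for every $g \in \Gamma \setminus P$, and also disjoint from every axis of an elliptic element of $\Gamma = \pi_1(\OO)$, since each such axis is a lift of some component of $\Sigma_\OO$.

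At this maximum, one of two tangencies must occur: either (a) $B_t$ touches a translate $gB_t$ for some non-parabolic $g \in \Gamma$, or (b) $B_t$ touches an axis of an elliptic element of $\Gamma$. In case (a) the classical Meyerhoff horoball-packing argument applies directly: the Euclidean area of the cusp torus $\partial B_t / P$ is at least $\sqrt{3}$, hence the cusp has volume at least $\sqrt{3}/2 > v_3/2$, and we are done.

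Case (b) is where the real work lies, and I expect it to be the main obstacle. An elliptic element of some order $n \geq 2$ has its axis $\alpha$ tangent to $B_t$, so the cusp alone may fail to realize the bound. The strategy is to combine the cusp with a standard Margulis collar around $\alpha$ and to establish a horoball-plus-axis packing estimate showing that the cusp-plus-tube volume is at least $v_3/2$. The bound should be sharp, with the extremal configuration being a torus cusp tangent to an axis of order $2$ --- matching a $4$-fold quotient of the figure--$8$ knot complement. I expect to prove this either by appealing to the cusped-orbifold volume estimates of Adams or by a direct upper-half-space computation that couples the Meyerhoff cusp area bound with the Gehring--Marshall--Martin collar radius estimate around an elliptic axis of order $n$; the order-$2$ case will require the most care, since that is where the inequality is tight.
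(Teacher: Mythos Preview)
Your case split into (a) and (b) is unnecessary. If the horoball $B_t$ meets the axis $\alpha$ of an elliptic $g \notin P$ at a point $p$, then $g$ fixes $p$, so $p \in B_t \cap gB_t$; thus $B_t$ already fails to be precisely invariant. In other words, tangency with an elliptic axis is automatically a tangency with a translate $gB_t$ for a non-parabolic $g$, and your case (b) is subsumed by case (a). Meyerhoff's packing argument therefore applies uniformly, with no need for the speculative cusp-plus-collar estimate you sketch.

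The genuine gap is in your case (a). Meyerhoff's argument only gives that the cusp torus has area at least $\sqrt{3}/2$, hence cusp volume at least $\sqrt{3}/4 \approx 0.433$, not $\sqrt{3}/2$ as you assert. Since $\sqrt{3}/4 < v_3/2 \approx 0.507$, the cusp volume by itself does not prove the lemma. The paper bridges this with B\"or\"oczky's horoball-packing density bound, which says that any horospherical cusp neighborhood occupies at most a $\sqrt{3}/(2v_3)$ fraction of $\vol(\OO)$. Combining the two gives
\[
\vol(\OO) \;\geq\; \frac{2v_3}{\sqrt{3}} \, \vol(C) \;\geq\; \frac{2v_3}{\sqrt{3}} \cdot \frac{\sqrt{3}}{4} \;=\; \frac{v_3}{2}.
\]
So the missing ingredient is the B\"or\"oczky density estimate, not any additional analysis near the singular locus.
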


\begin{proof}
Suppose $X_\OO$ is not closed. Then $\bdy X_\OO$ is a surface, which cannot contain any singular points because $\Sigma_\OO$ is a union of closed curves. If some component of $\bdy X_\OO$ has negative Euler characteristic, then $\OO$ has infinite volume. Thus it remains to consider the case where $\bdy X_\OO$ consists of tori, and $\OO$ has one or more torus cusps.

Let $C$ be a maximal cusp of $\OO$: that is, a horospherical cusp neighborhood expanded maximally until it bumps into itself. Using a horoball packing argument, Meyerhoff  showed that $\vol(C) \geq \sqrt{3}/4$. (Meyerhoff's theorem \cite[Section 5]{meyerhoff:first-estimate} is stated for manifolds, but his argument applies verbatim to an orbifold with a torus cusp.)
Furthermore, results of B\"or\"oczky \cite{boroczky} imply that the cusp neighborhood $C$ contains at most $\sqrt{3}/2v_3$ of the total volume of $\OO$. Therefore,
$$\vol(\OO) \: \geq \: \frac{2 v_3}{\sqrt{3}} \, \vol(C) \: \geq \: \frac{2 v_3}{\sqrt{3}} \cdot \frac{\sqrt{3}}{4} \: = \: v_3 / 2.$$

\vspace{-2ex}
\end{proof}

The next lemma takes a step toward proving part \eqref{i:only23} of Theorem \ref{thm:restrictions}, placing a restriction on the torsion orders of a minimal--volume link orbifold.

\begin{lemma}\label{lemma:7estimate}
Let $\OO$ be a hyperbolic link orbifold that contains $p$--torsion for $p \geq 7$. Then $\vol(\OO) \geq 0.1658...$.
\end{lemma}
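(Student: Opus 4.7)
The plan is to exploit the fact that a singular axis of high torsion order must have a large embedded tubular neighborhood, and then compute the volume of that neighborhood directly. Since the volume of the tube in the orbifold carries a factor of $1/p$, we need to show that the growth of the tube radius with $p$ dominates this factor, with the worst case occurring at the threshold $p = 7$.

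Let $\alpha \subset \Sigma_\OO$ be a component of torsion order $p \geq 7$, and lift it to a geodesic $\tilde\alpha \subset \HH^3$. The stabilizer of $\tilde\alpha$ in $\Gamma = \pi_1(\OO)$ splits as a direct product $\ZZ \times \ZZ_p$, where the $\ZZ$ factor is generated by a primitive loxodromic element translating along $\tilde\alpha$ by $\ell(\alpha)$, and the $\ZZ_p$ factor acts by rotation through angle $\theta = 2\pi/p$. Because $\OO$ is a link orbifold, $\tilde\alpha$ is a \emph{simple} axis in the sense of Gehring, Marshall, and Martin: no other elliptic axis meets or shares endpoints with $\tilde\alpha$.

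The heart of the argument is Step 1: apply the axial distance inequalities of Gehring, Marshall and Martin \cite{gmm:axial-distances, gehring-martin:commutators-collars} to obtain an explicit lower bound $R = R(\theta)$ on the embedded tube radius of $\tilde\alpha$ modulo its stabilizer. These inequalities further couple translation length to tube radius, forcing the product $\ell(\alpha)\sinh^2 R$ to be bounded below by an explicit function of $\theta$. In Step 2, one computes using cylindrical coordinates about $\tilde\alpha$ that the embedded tube $T_R(\alpha) \subset \OO$ has volume
$$\vol(T_R(\alpha)) \: = \: \frac{\pi\, \ell(\alpha)\, \sinh^2 R}{p},$$
since the cross-sectional disk is quotiented by the $\ZZ_p$-rotation.

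Combining Steps 1 and 2 gives a lower bound on $\vol(\OO) \geq \vol(T_R(\alpha))$ that depends only on $p$. In Step 3, one observes that this lower bound is monotonically decreasing in $p$ over the range $p \geq 7$: the $1/p$ factor weakens while the collar growth flattens out. Therefore the worst case is $p = 7$, and the task reduces to evaluating the GMM axial distance bound at the specific rotation angle $\theta = 2\pi/7$ and verifying that the resulting volume estimate exceeds $0.1658$.

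The main obstacle is the sharpness of the numerical estimate at the boundary case $p = 7$. For $p$ even slightly larger the bound is comfortable, but at $\theta = 2\pi/7$ the GMM inequality must be applied carefully—tracking all constants in the axial distance function and in the associated lower bound on $\ell(\alpha)\sinh^2 R$—to yield precisely the numerical conclusion $0.1658\ldots$. Any looseness in applying the collar estimates at this angle would force raising the threshold from $p = 7$ to some larger value, which would be insufficient since Corollary \ref{cor:minimal23} must rule out all torsion $p \geq 7$.
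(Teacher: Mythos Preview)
Your approach is genuinely different from the paper's, and it contains a real gap.

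\textbf{The gap.} Your Step 1 asserts that the GMM axial-distance inequalities ``further couple translation length to tube radius, forcing the product $\ell(\alpha)\sinh^2 R$ to be bounded below by an explicit function of $\theta$.'' This is the crux of your argument, and it is not justified. The cited GMM results \cite{gmm:axial-distances, gehring-martin:commutators-collars} give a lower bound on the collar radius $R$ that depends only on the torsion order $p$, not on $\ell(\alpha)$. They do not bound $\ell(\alpha)\sinh^2 R$ from below. Concretely: take $p=7$, so that the GMM collar bound gives $R \gtrsim 0.545$ and $\sinh^2 R \approx 0.33$. If $\ell(\alpha) \approx 0.5$ (not short enough to invoke a large Margulis tube), your tube-volume formula yields
\[
\frac{\pi \cdot 0.5 \cdot 0.33}{7} \approx 0.074,
\]
well below $0.1658$. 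So the tube alone cannot carry the estimate; you need volume \emph{outside} the tube, and your plan never accesses it.

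There is also a logical slip in Step 3: you say the bound is ``monotonically decreasing in $p$'' and then conclude the worst case is $p=7$. If it were decreasing, the worst case would be $p \to \infty$. (In fact the GMM collar grows like $\sinh R \sim p/\pi$, so a correct tube-volume bound would be \emph{increasing} in $p$; the conclusion is right but the stated reason is reversed.)

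\textbf{What the paper does instead.} The paper uses the GMM bound only to get $R \geq 0.545$ at $p=7$, then \emph{drills} $\alpha$ to obtain a cusped orbifold $\QQ = \OO \setminus \alpha$. Proposition~\ref{drillbound} (the Agol--Dunfield drilling estimate, passed to orbifolds) converts the collar bound into the multiplicative inequality $\vol(\QQ) \leq 3.06 \cdot \vol(\OO)$. Since $\QQ$ now has a torus cusp, Lemma~\ref{lemma:no-cusps} gives $\vol(\QQ) \geq v_3/2$, and dividing yields $\vol(\OO) \geq (v_3/2)/3.06 = 0.1658\ldots$. The drilling step is exactly what eliminates the dependence on $\ell(\alpha)$: it trades the singular geodesic for a cusp, whose neighborhood has a uniform volume bound regardless of how the original $\ell(\alpha)$ behaved.
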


The proof of this lemma relies on Proposition \ref{drillbound} from Section \ref{s:drill-fill}. Since Proposition \ref{drillbound} does not refer to any results in this paper, we may use it without any risk of circularity.

\begin{proof}
Let $\alpha$ be a component of the singular locus $\Sigma_\OO$ that has torsion order $p \geq 7$. Let $r$ be the \emph{collar radius} of $\alpha$, that is the maximal radius of an embedded tube about $\alpha$. (See Definition \ref{def:simple-collar} for a detailed discussion of collar radius.)

Gehring, Marshall, and Martin have done extensive work on estimating collar radii \cite{gmm:axial-distances, gehring-martin:commutators-collars}. In the setting where $p \geq 7$, they proved that the collar radius $r$ satisfies
$$\cosh(2r) \: \geq \: \frac{\cos( 2\pi / p)}{2 \sin^2 (\pi/p) } \: = \: \frac{\csc^2(\pi/p)}{2}   - 1,$$
which is an increasing function of $p$. Setting $p=7$ and solving for $r$, we obtain
$$ r \: \geq \: \frac{1}{2} \, \cosh^{-1} \left(  \frac{\csc^2(\pi/7)}{2}   - 1 \right) \: = \: 0.54527...$$

Let $\QQ = \OO \setminus \alpha$ be the cusped hyperbolic link orbifold obtained by drilling out $\alpha$. The above lower bound on $r$, combined with Proposition \ref{drillbound} in Section \ref{s:drill-fill}, implies that 
$$ \vol(\OO) \: \geq \: \frac{ \vol(\QQ)} {3.06} \, .$$
Plugging in the estimate $\vol(\QQ) \geq v_3/2 \geq 0.50745$ from Lemma \ref{lemma:no-cusps} completes the proof.
 \end{proof}

To conclude the proof of Theorem \ref{thm:restrictions}, we will need to start with an arbitrary link orbifold $\OO$ and \emph{reduce} to a new orbifold $\OO'$, whose  only torsion orders are $2$ and/or $3$.

\begin{define}\label{def:reduction}
Let $\OO$ be a link orbifold. We construct a new \emph{reduced orbifold} $\OO'$, whose base space and singular locus are (topologically) the same as those of $\OO$. Each component of  the singular locus $\Sigma_\OO$ that is labeled $2$ carries the same label in $\Sigma_{\OO'}$. However, each component of $\Sigma_\OO$ that is labeled  $p \geq 3$ will be relabeled $3$  in $\Sigma_{\OO'}$. \end{define}

\begin{proposition}\label{prop:reduction}
Let $\OO$ be a hyperbolic link orbifold, and let $\OO'$ be the reduced orbifold, constructed as in Definition \ref{def:reduction}. Then one of the following holds true.
\begin{enumerate}
\item\label{i:schlafli} $\OO'$ is hyperbolic. In this case, $\vol(\OO) \geq \vol(\OO')$, with equality if and only if $\OO = \OO'$.

\item\label{i:dunbar} $\OO'$ is geometric but not hyperbolic. In this case,
$\OO'$ is one of the two orbifolds depicted in Figure \ref{fig:dunbar}, and
$\vol(\OO) \geq 0.2537...$.. If $\Sigma_\OO$ is a knot, then $\vol(\OO) \geq
0.5074...$.

\item\label{i:turnover} $\OO'$ is not geometric. In this case, $\OO$ contains
an essential turnover (a sphere that intersects the singular locus $3$
times), and $\vol(\OO) \geq 0.1658...$.

\end{enumerate}
\end{proposition}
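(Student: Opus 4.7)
The proof proceeds case-by-case on the geometric type of $\OO'$, with the Schläfli variation formula for hyperbolic cone manifolds as the central tool. I would work with the one-parameter family of cone structures on the underlying pair $(X_\OO,\Sigma_\OO)$ in which the cone angle along a component labelled $n$ in $\OO$ is interpolated from $2\pi/n$ up to its reduction value ($2\pi/3$ if $n\geq 3$, or $\pi$ if $n=2$). Part \eqref{i:schlafli} is then immediate: as long as the entire deformation path lies in the hyperbolic stratum, the formula $dV=-\tfrac{1}{2}\sum_e L_e\,d\theta_e$ applies throughout, the cone angles move weakly upward, and hence $V$ moves weakly downward, strictly unless $\OO=\OO'$.

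For Part \eqref{i:dunbar}, the first task is to pin down $\OO'$. A closed orientable geometric but non-hyperbolic $3$-orbifold whose singular locus is a link with torsion orders in $\{2,3\}$ must be Euclidean: the spherical case is ruled out by orbifold Euler characteristic, and each of the other Thurston geometries forces a Seifert-fibered structure on $X_{\OO'}$ that is incompatible with the link-plus-$\{2,3\}$-torsion data. Dunbar's enumeration \cite{dunbar} of closed Euclidean $3$-orbifolds, restricted by these constraints, leaves exactly the two orbifolds of Figure \ref{fig:dunbar}. Because $\OO$ is hyperbolic while $\OO'$ is Euclidean, at least one edge of $\Sigma_\OO$ must carry label $\geq 4$. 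A second application of Schläfli---now reducing every such label down to exactly $4$, so that the corresponding cone angles sit at $\pi/2$, safely inside the hyperbolic stratum for these specific combinatorial types---produces an intermediate orbifold $\OO''$ satisfying $\vol(\OO)\geq\vol(\OO'')$. One then identifies $\OO''$ explicitly with a known hyperbolic orbifold, whose volume computes to $v_3/2=0.5074\ldots$ in the knot case and $v_3/4=0.2537\ldots$ in the link case.

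For Part \eqref{i:turnover}, the orbifold geometrization theorem \cite{blp:orbifold, chk:orbifold} asserts that a non-geometric closed orientable $3$-orbifold must contain either a bad $2$-suborbifold or an essential spherical/Euclidean $2$-suborbifold. With torsion restricted to $\{2,3\}$ and singular locus a link (no vertices), such a suborbifold is forced to be a sphere meeting $\Sigma_{\OO'}$ in exactly three points, i.e.\ a turnover. The same embedded sphere meets $\Sigma_\OO$ in three points with the original $\OO$-labels $(p,q,r)$; these labels must satisfy $1/p+1/q+1/r<1$, making the turnover hyperbolic and essential in $\OO$. Cutting $\OO$ along the totally geodesic representative of this turnover and invoking the volume estimate of Atkinson--Rafalski \cite{atkinson-rafalski} for hyperbolic $3$-orbifolds with totally geodesic turnover boundary then delivers the desired bound $\vol(\OO)\geq 0.1658\ldots$.

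The main obstacle will be Part \eqref{i:dunbar}: both the enumeration of non-hyperbolic geometric possibilities and the explicit volume identification of $\OO''$ require careful case analysis, and one must verify that the second Schläfli deformation does not accidentally degenerate before the cone angles reach $\pi/2$. The subtlety in Part \eqref{i:turnover} is in checking that the turnover supplied by geometrization, whose labels live naturally in $\OO'$, remains essential in $\OO$ once the original larger labels are restored---in particular, that it does not bound a ball orbifold in $\OO$.
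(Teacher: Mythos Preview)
Your overall framework---geometrization trichotomy, with Schl\"afli handling Case \eqref{i:schlafli}---matches the paper, and your volume computation in Case \eqref{i:dunbar} (reduce labels to $4$ and identify the result) is essentially what the paper does. But there are two genuine gaps.

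First, your enumeration in Case \eqref{i:dunbar} is not correct as stated. The claim that a geometric non-hyperbolic link orbifold with torsion in $\{2,3\}$ is forced to be Euclidean is false: label any regular fiber of a Seifert fibered manifold by $2$ or $3$ and you get counterexamples in every Seifert geometry, and there are spherical examples as well. What actually rules these out is the hyperbolicity of $\OO$, not constraints on $\OO'$ alone. The paper argues: if $\OO'$ is Seifert fibered then (after a possible double cover) $\Sigma^3_{\OO'}$ is a union of fibers, so $\OO'\setminus\Sigma^3_{\OO'}\cong\OO\setminus\Sigma^{\geq 3}_\OO$ is Seifert fibered, contradicting its hyperbolicity via Kojima; if $\OO'$ is solv, an essential torus or pillowcase persists in $\OO$. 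Only then does one land in Dunbar's short list of non-fibered exceptions. The same omission recurs in Case \eqref{i:turnover}: a torus or $(2,2,2,2)$ pillowcase is perfectly compatible with your stated combinatorial constraints on $\OO'$, and is eliminated only because it would remain Euclidean in $\OO$.

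Second, and more seriously, the bound $0.1658$ in Case \eqref{i:turnover} does \emph{not} come from Atkinson--Rafalski. For a $(2,3,7)$ turnover the boundary area is only $\pi/21$, and the Miyamoto-type estimate is far too weak there. The paper splits on the largest turnover label $r$. For $r\leq 6$ there are $24$ admissible triples, and Atkinson--Rafalski gives $\vol(\OO)\geq 0.2824$, minimized at $(2,4,5)$. For $r\geq 7$ the paper abandons the turnover entirely and instead uses that $\OO$ carries $p$--torsion with $p\geq 7$: the Gehring--Marshall--Martin collar bound gives tube radius at least $0.545$, and feeding this into the Agol--Dunfield drilling estimate (Proposition \ref{drillbound}) together with the cusp bound of Lemma \ref{lemma:no-cusps} yields $\vol(\OO)\geq 0.1658$. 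That collar-plus-drill argument, packaged in the paper as Lemma \ref{lemma:7estimate}, is the actual source of the number $0.1658$, and it is entirely missing from your proposal.
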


\begin{figure}
\labellist
\small\hair 2pt
\pinlabel {$3$} [r] at 12 104
\pinlabel {$2$} [r] at 157 61
\pinlabel {$3$} [br] at 196 102
\endlabellist
\includegraphics[scale=0.8]{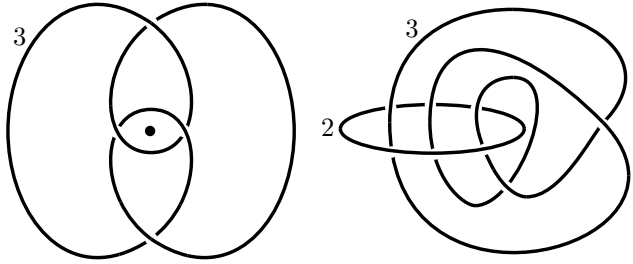}
\caption{The only link orbifolds that admit a Seifert fibered geometry but are not themselves Seifert fibered. Both are Euclidean, and have base space $S^3$. 
Note that the orbifold on the left has a two-fold symmetry, namely $\pi$--rotation about the center point. The quotient is the orbifold on the right.}
\label{fig:dunbar}
\end{figure}

Conclusion \eqref{i:schlafli} is the ``generic'' case of the proposition,
conclusion \eqref{i:turnover} is quite special, and conclusion
\eqref{i:dunbar} is extremely special. In the generic case, we may pass from
studying $\OO$ to studying $\OO'$. In the non-generic cases, we get a lower
bound on $\vol(\OO)$ anyhow.

\begin{proof}
By the orbifold geometrization theorem \cite{blp:orbifold, chk:orbifold},
$\OO'$ is either hyperbolic, or carries a Seifert fibered geometry, or else
has a non-trivial geometric decomposition. We will examine these
possibilities one by one.

First, suppose $\OO'$ is hyperbolic. Then, Kojima showed
\cite{kojima:cone-deformation} that the cone angles on the singular locus
$\Sigma_{\OO'}$ can be continuously deformed downward to those of
$\Sigma_\OO$. By the Schl\"afli formula for cone manifolds \cite[Theorem
3.20]{chk:orbifold}, the volume will strictly increase under this
deformation, unless the deformation is trivial and $\OO' = \OO$. In other
words, $\vol(\OO) \geq \vol(\OO')$, with equality if and only if $\OO =
\OO'$.

The other two cases are handled by lemmas.

\begin{lemma}\label{lemma:dunbar}
Suppose $\OO$ is a hyperbolic link orbifold, and the reduced orbifold $\OO'$ is geometric but not hyperbolic. Then $\OO'$ is one of the two orbifolds depicted in Figure \ref{fig:dunbar}. If $\Sigma_\OO$ is the figure $8$ knot, as in Figure \ref{fig:dunbar} (left), then $\vol(\OO) \geq 0.5074...$. If $\Sigma_\OO$ is the link $6^2_2$,  as in Figure \ref{fig:dunbar} (right), then $\vol(\OO) \geq 0.2537...$.
\end{lemma}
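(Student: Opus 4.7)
The plan is to combine the orbifold geometrization theorem with Dunbar's classification of closed Euclidean 3-orbifolds \cite{dunbar} to enumerate the possibilities for a non-hyperbolic geometric link orbifold $\OO'$ whose torsion orders are restricted to $\{2,3\}$, and then to extract the volume estimates via cone-manifold monotonicity and a small covering argument.

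The first and most delicate step is a structural dichotomy. If $\OO'$ admits a Seifert fibration in which every component of $\Sigma_{\OO'}$ is a fiber, then the same Seifert structure transports to the topological pair $(X_\OO, \Sigma_\OO)$, making $\OO$ itself Seifert fibered and contradicting its hyperbolicity. Hence $\OO'$ must be either a Sol orbifold or Seifert fibered incompatibly with $\Sigma_{\OO'}$. A Sol link orbifold is ruled out because its base space contains an essential torus that can be pushed off of the singular locus, which $\OO$ cannot admit. What remains is the case where $\OO'$ is Seifert fibered incompatibly with its singular link; extracting from Dunbar's list the link orbifolds with labels in $\{2,3\}$ that fit this description leaves exactly the two Euclidean orbifolds of Figure \ref{fig:dunbar}.

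For the figure-8 case, $\Sigma_\OO$ is the figure-8 knot labeled $n$ for some $n \geq 4$ (the value $n = 3$ would give $\OO = \OO'$, which is Euclidean). Kojima's cone-deformation theorem \cite{kojima:cone-deformation} together with the Schl\"afli formula for cone manifolds \cite[Thm.~3.20]{chk:orbifold} shows that within this one-parameter family the volume strictly increases with $n$, so the minimum is achieved at $n = 4$, where a direct computation yields $0.5074\ldots$.

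For the $6^2_2$ case, $\Sigma_\OO$ is the same link with labels $(2, n)$ for some $n \geq 4$. The $\pi$-rotation symmetry indicated in Figure \ref{fig:dunbar} is induced by the homomorphism $\pi_1(\OO) \to \ZZ_2$ sending the 2-torsion meridian to the nontrivial element and the $n$-torsion meridian to $0$; this map is defined on $H_1(S^3 \setminus L; \ZZ_2) \cong \ZZ_2^2$ and descends to $\pi_1(\OO)$ because $\mu_1^2$ and $\mu_2^n$ both lie in its kernel. The corresponding two-fold orbifold cover $\wt\OO \to \OO$ is a link orbifold whose singular locus is the figure-8 knot labeled $n$, so $\vol(\OO) = \tfrac12 \vol(\wt\OO) \geq 0.2537\ldots$ by the previous paragraph. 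The main obstacle in this plan is the enumeration step: one must faithfully translate Dunbar's classification into the language of link orbifolds and rigorously rule out the other non-hyperbolic Thurston geometries in the presence of small torsion; once that is done, the volume bounds follow cleanly.
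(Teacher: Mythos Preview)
Your overall plan matches the paper's: invoke geometrization and Dunbar's classification to pin down $\OO'$, then use Schl\"afli monotonicity and the two-fold cover for the volume bounds. The volume portion (figure--8 via $n=4$, and $6^2_2$ via the branched double cover over the $2$--labeled unknot) is exactly what the paper does.

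However, your elimination of the non-hyperbolic geometries has two genuine holes, both arising from the presence of $2$--torsion.

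\textbf{Seifert fibered case.} You only dispose of the situation where \emph{every} component of $\Sigma_{\OO'}$ is a fiber, and then route everything else (``Seifert fibered incompatibly'') to Dunbar's exceptional list. But Dunbar's $30$ exceptions are precisely the orbifolds carrying a Seifert geometry that are \emph{not} themselves Seifert fibered; they do not include Seifert fibered link orbifolds in which some $2$--torsion circle happens to be horizontal rather than vertical. Such orbifolds exist: by \cite[Proposition~2.41]{chk:orbifold} one only knows that a one-- or two--sheeted cover has singular locus consisting of fibers. The paper closes this gap by observing that the $3$--torsion locus (which survives any double cover) must be vertical, so $\OO' \setminus \Sigma^3_{\OO'} \cong \OO \setminus \Sigma^{\geq 3}_\OO$ is Seifert fibered, contradicting Kojima's theorem applied to $\OO$. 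Your transport argument can in fact be repaired to handle this case directly (the $2$--torsion labels are unchanged under reduction, so the fibration still transports), but as written you have not done so.

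\textbf{Solv case.} You assert that a Sol link orbifold contains an essential torus that can be pushed off $\Sigma_{\OO'}$. This is false in general: a Sol orbifold may instead contain an essential pillowcase, whose four corner points carry $2$--torsion and cannot be pushed off the singular locus. The paper handles this by noting that those corner labels are $2$ in both $\OO'$ and $\OO$, so the same pillowcase remains essential in $\OO$, contradicting hyperbolicity. You need this extra sentence.

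You yourself flag the enumeration as ``the main obstacle''; these are the two concrete places where your sketch actually breaks. Once they are patched, the argument is essentially identical to the paper's.
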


\begin{proof}
The $3$--orbifolds that are geometric but not hyperbolic were classified by Dunbar \cite{dunbar}. His work implies that $\OO'$ must be Seifert fibered, or have solv geometry, or be one of $30$ enumerated exceptions (which have constant--curvature metrics but are not Seifert fibered).

By construction, the singular locus of $\OO'$ is $\Sigma_{\OO'} = \Sigma^2_{\OO'} \cup \Sigma^3_{\OO'}$, where the first of these is labeled $2$ and the second is labeled $3$. 
Similarly, the hyperbolic orbifold $\OO$ has singular locus $\Sigma_{\OO} = \Sigma^2_{\OO} \cup \Sigma^{\geq 3}_{\OO}$. Definition \ref{def:reduction} can be reworded to say that 
$\OO' \setminus \Sigma^{3}_{\OO'} \cong \OO \setminus \Sigma^{\geq 3}_{\OO}.$

Suppose, first, that $\OO'$ is Seifert fibered. By \cite[Proposition 2.41]{chk:orbifold}, $\OO'$ must have a one- or two-sheeted cover $\PP'$, whose singular locus $\Sigma_{\PP'}$ is a union of fibers. The $3$--torsion locus $\Sigma^3_{\OO'}$ must still be singular after being lifted to a double cover, which means that it is the projection of $\Sigma^3_{\PP'}$, hence a union of fibers in $\OO'$. In other words, the cover $\PP' \to \OO'$ restricts to a one- or two-sheeted cover
$$\PP' \setminus \Sigma^{3}_{\PP'} \: \to \: \OO' \setminus \Sigma^{3}_{\OO'} \: \cong \:  \OO \setminus \Sigma^{\geq 3}_{\OO}$$
of Seifert fibered orbifolds. On the other hand, $\OO \setminus \Sigma^{\geq 3}_{\OO}$ must be hyperbolic by Kojima's theorem \cite{kojima:cone-deformation}, which is a contradiction. 

If $\OO'$ has solv geometry, then $\OO'$ must contain an essential torus or essential pillowcase. The cone points on the corners of a  pillowcase are all labeled $2$, i.e.\ the same in $\Sigma_{\OO'}$ as in $\Sigma_\OO$. Therefore, this torus or pillowcase would again contradict the hyperbolicity of $\OO$.
 
We conclude that $\OO'$ must be one of the exceptional orbifolds enumerated by Dunbar. The only link orbifolds on his list are the two Euclidean orbifolds shown in Figure \ref{fig:dunbar}. Compare \cite[Example 2.33]{chk:orbifold}. It remains to estimate the volume of $\OO$. 

If $\Sigma_{\OO'}$ is the figure--$8$ knot labeled $3$, as in Figure \ref{fig:dunbar} (left), then Schl\"afli's formula \cite[Theorem 3.20]{chk:orbifold} implies that the volume of $\OO$ is at least as large as that of the figure--$8$ knot labeled $4$. That orbifold has a four-fold manifold cover $N$, namely the cyclic cover of $S^3$ branched over the figure--$8$ knot. 
Using the work of Moser \cite{moser:thesis}, one may rigorously check that $\vol(N) = 2v_3 = 2.02988...$\protect\footnote{See the proof of Proposition \ref{prop:moser-search}  for a detailed discussion of rigorous volume estimates using Snap and Moser's work. This particular manifold $N$, namely the cyclic four-fold cover of $S^3$ branched over the figure--$8$ knot, is also discussed in the proof of Theorem \ref{thm:4torsion} in Section \ref{s:4torsion}.} Therefore,
$$\vol(\OO) \: \geq \: \vol(N) \, /4 \: = \: v_3 /2  \: = \: 0.50747...$$

If $\OO'$ is the orbifold in Figure \ref{fig:dunbar} (right), then by Definition \ref{def:reduction} the unknot labeled $2$ must also be labeled $2$ in the hyperbolic orbifold $\OO$. We may then take a branched double cover of $S^3$, branched along this unknot, and obtain an orbifold double cover of $\OO$. As in Figure \ref{fig:dunbar}, this double cover is an orbifold $\PP$ whose singular locus is the figure--$8$ knot. Thus, by the previous paragraph,
$$\vol(\OO) \: = \: \vol(\PP) / 2 \: \geq \: v_3  /4 \: = \: 0.25373...,$$
as desired.
\end{proof}

\begin{lemma}\label{lemma:turnover}
Suppose $\OO$ is a hyperbolic link orbifold, and the reduced orbifold $\OO'$ is not geometric. Then $\OO$ contains an essential turnover, and $\vol(\OO) \geq 0.1658...$.

If in addition, $\OO$ does not contain any $2$--torsion, then $\vol(\OO) \geq 0.4408...$.
\end{lemma}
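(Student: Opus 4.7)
The plan is to handle the lemma in two stages: first produce an essential hyperbolic turnover in $\OO$, then bound the volume using Atkinson--Rafalski.

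\emph{Stage 1: Producing the turnover.} Since $\OO'$ is non-geometric, the orbifold geometrization theorem \cite{blp:orbifold, chk:orbifold} supplies an essential closed 2-suborbifold $F$ of $\OO'$ of spherical or Euclidean type. Because the labels of $\OO'$ lie in $\{2,3\}$, $F$ belongs to a short list: an essential sphere, torus, football $S^2(n,n)$ with $n \in \{2,3\}$, pillowcase $S^2(2,2,2,2)$, or turnover of type $(2,2,2)$, $(2,2,3)$, $(2,3,3)$, or $(3,3,3)$. The same topological surface in $X_\OO$ defines a 2-suborbifold $T$ of $\OO$ with (possibly) higher cone labels. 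Crucially, sphericality of a bounding triangle group is preserved under reducing labels from $\geq 4$ to $3$, so a discal ball bounded by $T$ in $\OO$ would persist as a discal ball bounded by $F$ in $\OO'$; contrapositively, essentiality of $F$ in $\OO'$ forces essentiality of $T$ in $\OO$. I would then eliminate every case except the $(2,3,3)$- and $(3,3,3)$-turnovers: in each excluded case $T$ is itself non-hyperbolic (a football $S^2(p,q)$ must satisfy $p = q$ to embed as a suborbifold of the good orbifold $\OO$, a $(2,2,p)$-turnover is always spherical, and in the other cases $T$'s labels equal $F$'s), giving an essential non-hyperbolic 2-suborbifold of the hyperbolic $\OO$, a contradiction. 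Hence $F$ has type $(2,3,3)$ or $(3,3,3)$ in $\OO'$, and its lift $T$ is an essential turnover in $\OO$ of type $(2,p,q)$ or $(p,q,r)$ with $p,q,r \geq 3$; hyperbolicity of $\OO$ then forces $T$ to be hyperbolic.

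\emph{Stage 2: Volume bound.} Any essential hyperbolic turnover in a hyperbolic 3-orbifold is isotopic to a unique totally geodesic representative (standard from the rigidity of Fuchsian subgroups). Cutting $\OO$ along this representative yields a (possibly disconnected) hyperbolic 3-orbifold $\OO^*$ with totally geodesic turnover boundary satisfying $\vol(\OO^*) = \vol(\OO)$. The volume estimate of Atkinson and Rafalski \cite{atkinson-rafalski} for orbifolds with totally geodesic boundary then provides a lower bound on $\vol(\OO)$ depending on the type of $T$.

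To extract the numerical conclusion, I would case-split on the type $(a,b,c)$ of $T$. If $\max(a,b,c) \geq 7$ then $\OO$ contains $p$-torsion for some $p \geq 7$, and Lemma \ref{lemma:7estimate} immediately delivers $\vol(\OO) \geq 0.1658$. Otherwise $T$ falls into a finite explicit list of small hyperbolic types---$(2,4,5)$, $(2,4,6)$, $(2,5,5)$, $(2,5,6)$, $(2,6,6)$, $(3,3,4)$, $(3,3,5)$, $(3,3,6)$, $(3,4,4)$, up through $(6,6,6)$---for each of which one verifies that the Atkinson-Rafalski bound exceeds $0.1658$; the tightest cases (smallest turnover area) are $(2,4,5)$ and $(3,3,4)$. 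In the 2-torsion-free refinement, the $(2,3,3)$ possibility is excluded (any label-$2$ edge of $T$ would be $2$-torsion in $\OO$), leaving only the $(3,3,3)$ case with $T$ of type $(p,q,r)$, $p,q,r \geq 3$; applying Atkinson-Rafalski to the minimum-area type $(3,3,4)$ yields the stronger bound $\vol(\OO) \geq 0.4408$. The main technical obstacle will be verifying the numerical Atkinson-Rafalski bound in the smallest-area turnover cases, where the estimate is tightest.
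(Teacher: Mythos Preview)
Your Stage~1 and the first half of Stage~2 (the $0.1658$ bound) track the paper's argument closely: extract an essential $2$--suborbifold from the geometric decomposition of $\OO'$, recognize that only the $(2,3,3)$ and $(3,3,3)$ turnover types can become hyperbolic when the labels are restored to those of $\OO$, realize the resulting turnover $T\subset\OO$ as totally geodesic (the paper cites Adams--Schoenfeld \cite{adams-schoenfeld} rather than Fuchsian rigidity, but this is cosmetic), invoke Lemma~\ref{lemma:7estimate} when some label is $\geq 7$, and otherwise apply Atkinson--Rafalski to the finitely many hyperbolic triples with all labels $\leq 6$.

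The gap is in the $2$--torsion--free refinement. You correctly observe that $T$ has type $(p,q,r)$ with $p,q,r\geq 3$, and then assert that the minimum Atkinson--Rafalski bound over all such types is $0.4408$, achieved at $(3,3,4)$. But you have not capped the labels from above. Your own case split routed every triple with $\max(a,b,c)\geq 7$ through Lemma~\ref{lemma:7estimate}, which yields only $0.1658$---far short of $0.4408$. If instead you intend to apply Atkinson--Rafalski directly even when a label is $\geq 7$, you are now ranging over infinitely many turnover types, and you supply no monotonicity argument to reduce to $(3,3,4)$.

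The paper closes this gap by passing to an intermediate orbifold $\OO''$: starting from $\OO$, relabel every singular component with label $\geq 4$ down to $4$. Since $\OO$ has no $2$--torsion, the only Euclidean turnover with labels in $\{3,4\}$ is $(3,3,3)$, and such a turnover in $\OO''$ would already have been $(3,3,3)$ in $\OO$; so $\OO''$ is hyperbolic. By Schl\"afli, $\vol(\OO)\geq\vol(\OO'')$, and the turnover in $\OO''$ now has type in the genuinely finite list $\{(3,3,4),(3,4,4),(4,4,4)\}$. Atkinson--Rafalski then applies cleanly to $\OO''$, with the minimum $0.4408$ occurring at $(3,3,4)$.
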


\begin{proof}
Recall that by the orbifold geometrization theorem \cite{blp:orbifold, chk:orbifold},
$\OO'$ is either hyperbolic, or carries a Seifert fibered geometry, or
has a non-trivial geometric decomposition. The hypotheses of this lemma place us in the last of these cases.

 Let $S' $ be a \emph{maximal} surface in the geometric decomposition of $\OO'$. That is, $S' \subset \OO'$ is an embedded  $2$--orbifold, each component of which is essential and has non-negative Euler characteristic. Furthermore, $S'$ is \emph{maximal} in the sense that no component of $S'$ is parallel to any other, and the only essential $2$--orbifolds in $\OO' \setminus S'$ have negative Euler characteristic.

Topologically, each component of $S'$ is either a torus (with no singular points), or a sphere with $0 \leq c \leq 4$ cone points. Note that if we change the cone angles of $\OO'$ back to those of $\OO$, then $S'$ becomes an essential $2$--orbifold $S \subset \OO$, which is no longer an obstruction to hyperbolicity because $\OO$ is hyperbolic. There is only one possibility where $S'$ obstructs hyperbolicity while $S$ does not: each component of $S$ or $S'$ is a \emph{turnover}, that is a $2$--sphere punctured $3$ times by the singular locus.

Adams and Schoenfeld showed that every turnover component of $S$ is totally geodesic in $\OO$  \cite[Theorem 2.1]{adams-schoenfeld}. We may cut $\OO$ along one of these turnovers, call it $T$, and obtain an orbifold with totally geodesic boundary. Let $p \leq q \leq r$ be the orders of torsion on the three cone points of $T$. If $r \geq 7$, then Lemma \ref{lemma:7estimate} implies that $\vol(\OO) \geq 0.1658...$, as desired. 

Thus we may assume that each of $p,q,r$ is at most $6$.
At the same time, hyperbolicity of $\OO$ implies that $T$ must be a hyperbolic turnover, hence 
$$2 \leq p \leq q \leq r \leq 6 \qquad \mbox{and} \qquad  \frac{1}{p}+\frac{1}{q}+\frac{1}{r} \: < \: 1.$$  

There are exactly $24$ integer triples $(p,q,r)$ satisfying the above constraints. For each such triple $(p,q,r)$, we construct and evaluate the volume estimate of Atkinson and Rafalski \cite[Theorem 3.4]{atkinson-rafalski}, which is based on Miyamoto's theorem \cite{miyamoto}. According to \cite[Theorem 3.4]{atkinson-rafalski},
$$\vol(\OO) \geq 0.28248...,$$
with the lowest--volume scenario occurring when $T$ is a $(2,4,5)$ turnover. 

Now, suppose that all torsion orders of $\OO$ are at least $3$. Since the torsion orders of $\OO'$ have been reduced to $3$, every turnover of $S' \subset \OO'$ is a $(3,3,3)$ turnover. We construct a new orbifold $\OO''$, in which every component of $\Sigma_\OO$ that was labeled $p \geq 4$ gets relabeled with torsion order $4$. This will not create any $(3,3,3)$ triples, hence $\OO''$ is hyperbolic. 

By the Schl\"afli formula, $\vol(\OO) \geq \vol(\OO'')$. Furthermore, a turnover $T \subset \OO$ corresponds to a (totally geodesic) turnover $T'' \subset \OO''$, whose cone points have labels $(p,q,r)$ with $3  \leq p \leq q \leq r = 4$. There are three possibilities. Applying the Atkinson--Rafalski estimate \cite[Theorem 3.4]{atkinson-rafalski}  to $\OO''$ gives
$$\vol(\OO) \geq \vol(\OO'') \geq 0.44089...,$$
with the lowest--volume scenario occurring when $T$ is a $(3,3,4)$ turnover.
\end{proof}

Lemmas \ref{lemma:dunbar} and \ref{lemma:turnover} complete the proof of Proposition \ref{prop:reduction}.
\end{proof}

We may also complete the proof of Theorem \ref{thm:restrictions}.

\begin{proof}[Proof of Theorem \ref{thm:restrictions}]
Let $\Omin$ be a link orbifold of minimal volume. By considering the link orbifold $\OL$ in Figure \ref{fig:knots}, we know that $\vol(\Omin) \leq \vol(\OL) < 0.16$. Thus, by Lemma \ref{lemma:no-cusps}, the base space of $\Omin$ must be closed.

Now, consider what Proposition \ref{prop:reduction} says about $\Omin$. Since $\vol(\Omin) < 0.16$, we must be in Case \eqref{i:schlafli} of the proposition: the reduced orbifold $\Omin'$ is hyperbolic. Furthermore, since $\Omin$ has minimal volume by hypothesis, we have $\vol(\Omin) = \vol(\Omin')$. Thus $\Omin = \Omin'$, and this orbifold has only $2$-- and/or $3$--torsion by construction.
\end{proof}

Another immediate consequence of Proposition \ref{prop:reduction} and its proof is the following result, which will be useful in Sections \ref{sec:null-hom} and \ref{s:drill-fill}.

\begin{proposition}\label{prop:reduce-no2}
Let $\OO$ be a hyperbolic link orbifold without any $2$--torsion, and let $\OO'$ be the reduced orbifold, obtained from $\OO$ by changing all cone labels to $3$. Then one of the following holds true.
\begin{enumerate}
\item\label{no2schlafli} $\OO'$ is hyperbolic. In this case, $\vol(\OO) \geq \vol(\OO')$, with equality if and only if $\OO = \OO'$.

\item\label{no2dunbar} $\OO'$ is geometric but not hyperbolic. In this case,  $\Sigma_\OO$ is the figure-8 knot, and $\vol(\OO) \geq 0.5074...$.

\item\label{no2turnover} $\OO'$ is not geometric. In this case, $\OO$ contains an essential turnover, and $\vol(\OO) \geq 0.4408...$.

\end{enumerate}
\end{proposition}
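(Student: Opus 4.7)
The plan is to run through the same three-case analysis as in Proposition \ref{prop:reduction}, observing at each step what the additional hypothesis of no $2$--torsion buys us. Because the construction of the reduced orbifold $\OO'$ preserves $2$--labels (Definition \ref{def:reduction}), the assumption that $\OO$ contains no $2$--torsion forces every label in $\Sigma_{\OO'}$ to equal $3$. This single observation is what makes each of the three cases stricter than in Proposition \ref{prop:reduction}.

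First I would apply the orbifold geometrization theorem \cite{blp:orbifold, chk:orbifold} to $\OO'$, exactly as in the proof of Proposition \ref{prop:reduction}. If $\OO'$ is hyperbolic, Kojima's cone-deformation theorem \cite{kojima:cone-deformation} deforms the labels from $3$ down to the original labels of $\OO$ (each at least $3$), and the Schl\"afli formula for cone manifolds \cite[Theorem 3.20]{chk:orbifold} gives $\vol(\OO) \geq \vol(\OO')$ with equality iff $\OO = \OO'$. This is conclusion \eqref{no2schlafli} verbatim.

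For the case where $\OO'$ is geometric but not hyperbolic, I would invoke Lemma \ref{lemma:dunbar}: $\OO'$ must be one of the two Euclidean orbifolds in Figure \ref{fig:dunbar}. The orbifold on the right has a component labeled $2$, which is incompatible with our hypothesis, so $\OO'$ must be the orbifold on the left, namely the figure-$8$ knot labeled $3$. Thus $\Sigma_\OO$ is the figure-$8$ knot. The bound $\vol(\OO) \geq v_3/2 = 0.5074...$ is then read off directly from the argument given at the end of the proof of Lemma \ref{lemma:dunbar} (Schl\"afli deformation from label $3$ to label $\geq 4$, followed by the four-fold manifold cover and Moser's rigorous volume estimate). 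This gives conclusion \eqref{no2dunbar}.

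Finally, if $\OO'$ is not geometric, then Lemma \ref{lemma:turnover} shows that $\OO$ contains an essential turnover. The hypothesis that $\OO$ has no $2$--torsion is exactly the hypothesis used in the ``in addition'' clause at the end of Lemma \ref{lemma:turnover}, which yields $\vol(\OO) \geq 0.4408...$ via the Atkinson--Rafalski estimate \cite[Theorem 3.4]{atkinson-rafalski} applied to the auxiliary orbifold $\OO''$ whose turnover labels lie in $\{(3,3,4),(3,4,4),(4,4,4)\}$. This is conclusion \eqref{no2turnover}. Since all three cases follow either directly or by specialization from Proposition \ref{prop:reduction} and its supporting lemmas, the main ``obstacle'' is really just the bookkeeping of which Dunbar orbifold and which turnover triples survive the no-$2$--torsion hypothesis; no new geometric input is required.
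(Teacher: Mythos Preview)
Your proposal is correct and follows essentially the same approach as the paper: the paper's proof simply observes that conclusion \eqref{no2schlafli} is identical to case \eqref{i:schlafli} of Proposition \ref{prop:reduction}, that conclusion \eqref{no2dunbar} follows from case \eqref{i:dunbar} because the right-hand orbifold in Figure \ref{fig:dunbar} carries a $2$--label, and that conclusion \eqref{no2turnover} follows from the second clause of Lemma \ref{lemma:turnover}. Your write-up is more expansive but traces exactly this logic.
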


\begin{proof}
Conclusion \eqref{no2schlafli} is identical to \eqref{i:schlafli} from Proposition \ref{prop:reduction}. Conclusion \eqref{no2dunbar} immediately follows from \eqref{i:dunbar} of Proposition \ref{prop:reduction}, because the exceptional link orbifold in Figure \ref{fig:dunbar} (right) has a component labeled $2$. Finally, conclusion \eqref{no2turnover} follows from Lemma \ref{lemma:turnover}.
\end{proof}

\section{Mod $p$ homology and volume estimates}\label{sec:null-hom}

The main goal of this section is to prove Theorem \ref{thm:knots-hom}. The homological hypotheses of this theorem will allow us to find small--degree manifold covers of these orbifolds, via the following lemma.

\begin{lemma}\label{lemma:branched-cover}
Let $X$ be a closed oriented $3$--manifold, $L = L_1 \cup \ldots \cup L_k$ an (oriented) link in $X$, and $n$ a positive integer. Then the following are equivalent:
\begin{enumerate}
\item\label{i:null-hom} There exist coefficients $c_1, \ldots, c_k \in \ZZ_n$, with each $c_i$ a generator of $\ZZ_n$, such that $\sum c_i L_i = 0 \in H_1(X; \ZZ_n)$.
\item\label{i:homomorphism} There is a homomorphism $\varphi: \pi_1(X \setminus L) \to \ZZ_n$,  which sends the meridian of each $L_i$  to a generator.
\item\label{i:cover} There is an $n$--fold cyclic branched cover $Y \to X$, which is branched over $L$.
\end{enumerate} 
\end{lemma}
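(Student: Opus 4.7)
The plan is to establish the two biconditionals $(2)\Leftrightarrow(3)$ and $(1)\Leftrightarrow(2)$ separately, since they rely on rather different tools.

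For $(2)\Leftrightarrow(3)$, I would use standard covering space theory. Given a homomorphism $\varphi$ as in (2), its image contains each $\varphi(\mu_i)$, which is by assumption a generator of $\ZZ_n$, so $\varphi$ is surjective; hence $\ker\varphi$ defines a connected regular $n$--fold cover $Y' \to X \setminus L$ with deck group $\ZZ_n$. To extend $Y'$ to a branched cover $Y \to X$, I would observe that for each small meridian disk $D_i$ about $L_i$, the condition that $\varphi(\mu_i)$ has order exactly $n$ implies that $\pi^{-1}(\mu_i)$ is a single circle that wraps $n$ times around $\mu_i$; gluing a disk back along this circle for each component yields an $n$--fold cyclic branched cover with branch locus $L$, modelled locally by $z \mapsto z^n$ on each meridian disk. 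Conversely, the restriction of any cyclic branched cover as in (3) to the complement of the branch locus is an unbranched regular $\ZZ_n$--cover, and the local branching picture forces each meridian to map to a generator.

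For $(1)\Leftrightarrow(2)$, I would use $\ZZ_n$--coefficient cohomology. Because $\ZZ_n$ is abelian, $\varphi$ factors through $H_1(X\setminus L;\ZZ_n)$ and so corresponds to a cohomology class in $H^1(X\setminus L;\ZZ_n)$. I would then examine the long exact sequence of the pair $(X, X\setminus L)$ with $\ZZ_n$ coefficients:
$$H^1(X;\ZZ_n) \to H^1(X \setminus L; \ZZ_n) \xrightarrow{\rho} H^2(X, X \setminus L; \ZZ_n) \xrightarrow{\delta} H^2(X; \ZZ_n).$$
Excision together with the Thom isomorphism applied to a tubular neighborhood of $L$ identifies $H^2(X,X\setminus L;\ZZ_n) \cong \bigoplus_{i=1}^k \ZZ_n$, the $i$--th summand generated by the Thom class of the normal disk bundle of $L_i$; pairing this Thom class against $\varphi$ recovers $\varphi(\mu_i)$, so $\rho(\varphi) = (\varphi(\mu_1), \ldots, \varphi(\mu_k))$. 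Naturality of Poincaré--Lefschetz duality identifies $\delta(c_1,\ldots,c_k)$ with the Poincaré dual of $\sum c_i[L_i] \in H_1(X;\ZZ_n)$. By exactness, a class $\varphi$ with prescribed meridian values $(c_i)$ exists if and only if $\sum c_i[L_i] = 0$ in $H_1(X;\ZZ_n)$. The requirement in both (1) and (2) that each $c_i$ be a generator of $\ZZ_n$ matches up without further work.

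The hard part will be verifying the precise form of the connecting homomorphism $\delta$, namely that it is given by capped Poincaré duality pairing. This is a standard consequence of the naturality of Poincaré--Lefschetz duality for the pair $(X, X\setminus L)$, but requires a careful diagram chase or a clean citation. The remaining pieces — the covering space extension in $(2)\Leftrightarrow(3)$, the Thom/excision computation, and the identification of $\rho(\varphi)$ with meridian values — are all routine.
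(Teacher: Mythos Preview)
Your proposal is correct. For $(2)\Leftrightarrow(3)$ your argument is essentially identical to the paper's.

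For $(1)\Leftrightarrow(2)$ the paper takes a slightly different, more geometric route. Rather than invoking the long exact sequence of the pair $(X, X\setminus L)$ and the Thom isomorphism, the paper works directly with the compact manifold-with-boundary $X\setminus N(L)$ and applies Poincar\'e--Lefschetz duality there. A $2$--chain $\omega$ with $\partial\omega = \sum c_i L_i$ in $X$ represents a class in $H_2(X\setminus N(L), \partial N(L);\ZZ_n)$; its Poincar\'e dual lies in $H^1(X\setminus N(L);\ZZ_n)$, which is the desired homomorphism $\varphi$, and one reads off $\varphi(\mu_i)=c_i$ as the intersection number of $\partial\omega$ with $\mu_i$ on $\partial N(L_i)$. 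The converse simply reverses this. Your exact-sequence argument is equally valid and arguably packages the ``existence iff obstruction vanishes'' statement more cleanly; the paper's approach trades that for making the geometric content (bounding $2$--chains and intersection numbers) explicit, and avoids having to verify the form of the connecting map $\delta$. Both arguments rest on the same underlying duality, sliced in different places.
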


The statement of Lemma \ref{lemma:branched-cover} is very natural, and likely known to many experts. However, to the best of our knowledge, this was not previously observed in the literature. We are grateful to Marc Culler and Peter Shalen for helpful discussions about this topic.

\begin{proof}
For $\eqref{i:null-hom} \Rightarrow \eqref{i:homomorphism}$, let $\omega$ be a $2$--chain with coefficients in $\ZZ_n$, such that $\bdy \omega = \sum c_i L_i$. Then $\omega$ represents a non-trivial relative homology class in $H_2 (X \setminus N(L), \bdy N(L); \, \ZZ_n)$. The Poincar\'e dual of $\omega$ is a cohomology class in $H^1  (X\setminus N(L) ; \, \ZZ_n)$, i.e.\ a homomorphism $\psi: H_1 (X  \setminus N(L); \,  \ZZ_n) \to \ZZ_n$.  By pre-composing $\psi$ with abelianization and reduction mod $n$, we obtain a homomorphism $\varphi: \pi_1 (X \setminus L) \to \ZZ_n$. By construction, $\varphi$ maps the meridian of $L_i$ to the coefficient $c_i$.

For $\eqref{i:homomorphism} \Rightarrow \eqref{i:null-hom}$, we may reverse the argument above. A homomorphism $\varphi: \pi_1 (X \setminus L) \to \ZZ_n$ must factor through $\psi: H_1 (X  \setminus N(L); \,  \ZZ_n) \to \ZZ_n$. Viewing $\psi$ as a cohomology class, we obtain its Poincar\'e dual $\omega \in H_2 (X \setminus N(L), \bdy N(L); \, \ZZ_n)$. Then $\bdy \omega$ is represented by a $1$--cycle, consisting of curves on each torus of $ \bdy N(L)$. The intersection of $\bdy \omega$ with the meridian $\mu_i$ of $L_i$ is exactly $\varphi(\mu_i) = c_i$, which is a generator of $\ZZ_n$ by hypothesis. Thus, by collapsing the tubular neighborhood $N(L_i)$ to $L_i$ itself, we obtain $\bdy \omega = \sum c_i L_i$, as desired.

For $\eqref{i:homomorphism} \Rightarrow \eqref{i:cover}$, observe that by hypothesis, the homomorphism $\varphi$ is onto. The index--$n$ subgroup $\ker( \varphi)$  corresponds to an $n$--sheeted cyclic cover $Y' \to X \setminus N(L)$. Since the meridian $\mu_i$ of $L_i$ maps to a generator $c_i \in \ZZ_n$, we conclude that $\mu_i^n$ lifts to a simple closed curve in $Y'$, but no smaller power of $\mu_i$ lifts. By Dehn filling $Y'$ along the curves $(\mu_1^n, \ldots, \mu_k^n)$, we obtain a compact manifold $Y$, which is a branched cover of $X$ with branching locus $L$.

For $\eqref{i:cover} \Rightarrow \eqref{i:homomorphism}$, suppose we have a cyclic branched cover $Y \to X$. By removing  the branch locus $L$, we obtain an $n$--sheeted, unbranched cover $Y' \to X \setminus L$, with deck group $\ZZ_n$. The homomorphism $\varphi: \pi_1(X \setminus L) \to \ZZ_n$ records the deck transformation corresponding to each element of $\pi_1(X \setminus L)$. Because the cover is branched $n$ times over each component $L_i$ of $L$, the meridian $\mu_i$ of $L_i$ induces a deck transformation that generates $\ZZ_n$.
\end{proof}

\begin{example}\label{ex:meridian-filling}
Consider the SnapPea census manifolds $\tt m004$, $\tt m006$, $\tt m007$, $\tt m009$, and $\tt m015$. Let $M$ denote one of these five manifolds, and let $X = M(1,0)$ denote its $(1,0)$ Dehn filling. For each $M$, 
we can use Snap and Sage to confirm that the $(1,0)$ slope on the cusp generates a $\ZZ$ summand in $H_1(M, \ZZ) \cong \ZZ \oplus \mbox{(Torsion)}$. Thus, for every positive integer $n$,  there is a chain of surjective homomorphisms
$$ \pi_1(M) \to H_1(M; \ZZ) \to \ZZ \to \ZZ_n,$$
such that the composition  $\varphi: \pi_1(M) \to \ZZ_n$ maps the $(1,0)$ slope to a generator. Note that the $(1,0)$ slope is exactly the meridian of $L$, where $L \subset X$ is the core of the filled solid torus.
Therefore, by Lemma \ref{lemma:branched-cover}, there is an $n$--fold cyclic branched cover of $X$, branched over $L$.

This conclusion has the following interpretation in the language of orbifolds. Let $\OO$ be an orbifold whose base space is $X_\OO = X$, and whose singular locus is $\Sigma_\OO = L$, labeled $n$. (In the language of Section \ref{s:drill-fill}, $\OO$ is the result of $(n,0)$ Dehn filling on the census manifold $M$.) Now, this branched cover of $X_\OO$ branched over $L = \Sigma_\OO$ is an honest manifold cover of $\OO$. In other words, for every $n$, the orbifold $M(n,0)$ has an $n$--fold manifold cover.
\end{example}

The following result is a useful stepping stone toward Theorem \ref{thm:knots-hom}. It is also slightly more general, in the sense that  it only places homological hypotheses on the link $L = \Sigma_\OO$, rather than on the whole base space $X_\OO$.

\begin{theorem}\label{thm:3null}
Let $\OO$ be a hyperbolic link orbifold with no $2$--torsion. Let $\Sigma_\OO = L_1 \cup \ldots \cup L_k$, and suppose that the $L_i$ can be oriented so that $\sum L_i = 0 \in H_1(X_\OO, \ZZ_3)$. Then $\vol(\OO) \geq
\vol(\OK) = 0.31423...$, with equality if and only if $\OO = \OK$.
\end{theorem}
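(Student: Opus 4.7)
The plan is first to use Proposition \ref{prop:reduce-no2} to reduce to the case that $\OO$ has only $3$--torsion, then to apply Lemma \ref{lemma:branched-cover} to build a manifold cover of $\OO$, and finally to compare against the Weeks manifold via Theorem \ref{thm:weeks}.

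Since $\OO$ has no $2$--torsion, Proposition \ref{prop:reduce-no2} applies. In cases \eqref{no2dunbar} and \eqref{no2turnover} we obtain $\vol(\OO) \geq 0.4408\ldots > 0.31423\ldots = \vol(\OK)$, which strictly exceeds the desired bound and so also rules out the equality case. Thus we may assume we are in case \eqref{no2schlafli}: the reduced orbifold $\OO'$, obtained from $\OO$ by relabeling every torsion order as $3$, is hyperbolic, and $\vol(\OO) \geq \vol(\OO')$ with equality if and only if $\OO = \OO'$.

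The hypothesis $\sum L_i = 0 \in H_1(X_\OO; \ZZ_3)$ with all coefficients $c_i = 1$ (each a generator of $\ZZ_3$) lets us apply Lemma \ref{lemma:branched-cover} with $n = 3$. This produces a $3$--fold cyclic branched cover $M \to X_{\OO'} = X_\OO$, branched along $\Sigma_{\OO'} = L$. Since every component of $\Sigma_{\OO'}$ is labeled $3$, the $\ZZ_3$ branching exactly resolves the isotropy along the singular locus, so $M \to \OO'$ is an (unbranched) $3$--fold orbifold cover, and $M$ is a closed orientable $3$--manifold. Because $\OO'$ is hyperbolic, so is $M$, and Theorem \ref{thm:weeks} gives $\vol(M) \geq \vol(M_W)$ with equality if and only if $M = M_W$. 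Consequently
$$ \vol(\OO) \;\geq\; \vol(\OO') \;=\; \frac{\vol(M)}{3} \;\geq\; \frac{\vol(M_W)}{3} \;=\; \vol(\OK). $$

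For the equality assertion, every inequality above must be tight, forcing $\OO = \OO'$ (so $\OO$ carries only $3$--torsion) and $M = M_W$. It remains to identify $\OO$ with $\OK$. By Mostow rigidity the deck $\ZZ_3$-action on $M_W$ is realized by orientation-preserving isometries, so $\OO$ is the quotient of $M_W$ by an order-$3$ isometry whose fixed set is a disjoint union of geodesics projecting to the singular knot. This is the main obstacle: one must check that, up to conjugacy in $\Isom(M_W)$, the only such quotient is $\OK$. Since $\Isom(M_W)$ is small and well known, this reduces to a finite, explicit verification (e.g.\ in SnapPy) that the unique conjugacy class of order-$3$ orientation-preserving isometries of $M_W$ is the one whose quotient is precisely $\OK$.
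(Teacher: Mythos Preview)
Your argument follows essentially the same route as the paper's, but there is one genuine gap and one place where the paper is sharper.

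\medskip

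\textbf{The gap.} You invoke Lemma \ref{lemma:branched-cover} to produce $M$, and then assert that $M$ is a \emph{closed} hyperbolic $3$--manifold so that Theorem \ref{thm:weeks} applies. But Lemma \ref{lemma:branched-cover} is stated only for a closed base space $X$, and nothing in the hypotheses of Theorem \ref{thm:3null} forces $X_\OO$ to be closed. If $X_\OO$ has cusps, then so does $M$, and Theorem \ref{thm:weeks} does not apply. The paper handles this by inserting Lemma \ref{lemma:no-cusps}: if $X_\OO$ is not closed then $\vol(\OO) \geq v_3/2 > 0.5$, which already exceeds the bound, so one may assume $X_\OO$ is closed before invoking the branched-cover machinery.

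\medskip

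\textbf{The equality case.} Your reduction to a conjugacy-class check in $\Isom(M_W)$ is correct in principle, but the paper disposes of it more cleanly: the symmetry group of $M_W$ is the dihedral group of order $6$, which has a \emph{unique} subgroup of order $3$, and the quotient by that subgroup is $\OK$. So there is nothing to search for --- the group-theoretic structure already pins down the quotient uniquely. Your appeal to SnapPy is unnecessary once this fact is quoted.
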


\begin{proof} 
As in Definition \ref{def:reduction}, let $\OO'$ be the reduced orbifold
obtained by changing all cone labels of $\OO$ to $3$. By Proposition \ref{prop:reduce-no2}, either $\vol(\OO) \geq 0.4408...$, which is larger than required, or $\OO'$ is hyperbolic and $\vol(\OO) \geq \vol(\OO')$. We restrict attention to the latter case.

By Lemma \ref{lemma:no-cusps}, the base space $X_\OO = X_{\OO'}$ is closed, or else the volume is again larger than required. Thus we may use the homological tools of this section.

Since we have assumed that  $\sum L_i = 0 \in H_1(X_{\OO'};\ZZ_3)$,
Lemma~\ref{lemma:branched-cover} implies that there is a $3$--fold cyclic
branched cover $M \to X_{\OO'}$, branched over $\Sigma_{\OO'}$.  Since the cone
labels of $\OO'$ are all $3$, the singular geodesics of $\Sigma_{\OO'}$ lift to non-singular ones, and $M$ is a hyperbolic manifold.  By Theorem \ref{thm:weeks}, $\vol(M) \geq \vol(M_W)$, with equality if and only if $M$ is the Weeks manifold $M_W$. Therefore, $\vol(\OO') \geq \vol(M_W) / 3 = 0.31423...$

The symmetry group of $M_W$ is the dihedral group of order $6$,
which has a unique subgroup of order $3$.  The quotient of $M_W$ under this subgroup is precisely the orbifold $\OK$ of Figure \ref{fig:knots}, left. Therefore, the volume of $\OO$ is minimal if and only if $\OO = \OO'  = \OK$.
\end{proof}

We can now prove Theorem \ref{thm:knots-hom} from the introduction.

\begin{named}{Theorem \ref{thm:knots-hom}}
Let $\OO$ be a hyperbolic  link orbifold whose base space has $H_1( X_\OO; \ZZ_6) = 0$.
\begin{enumerate}
\item\label{i:knots} If the singular locus $\Sigma_\OO$ is a knot, then $\vol(\OO) \geq 0.31423...$, with equality if and only if $\OO = \OK$.
\item\label{i:links} If the singular locus $\Sigma_\OO$ is a link with multiple components, then $\vol(\OO) \geq 0.15711...$, with equality if
and only if $\OO = \OL$.
\end{enumerate}
\end{named}

\begin{proof}
First, suppose that the singular locus $\Sigma_\OO$ is a knot. 
Since $X_\OO$ is a $\ZZ_6$--homology sphere, it is also a $\ZZ_2$--homology sphere.
Thus, if $\OO$ has $2$--torsion, Lemma~\ref{lemma:branched-cover} implies that it has a $2$--sheeted manifold cover, which implies that $\vol(\OO) \geq \vol(M_W) / 2 > 0.47$. Here, as above, $M_W$ is the Weeks manifold. 

Otherwise, if $\OO$ has no $2$--torsion, observe that $X_\OO$ is a $\ZZ_3$--homology sphere, and apply Theorem~\ref{thm:3null}. This proves \eqref{i:knots}.

\smallskip

Next, suppose that the singular locus $\Sigma_\OO$ has multiple components.
We may decompose $\Sigma_\OO$ into two disjoint links: $\Sigma_\OO = \Sigma_{\OO}^2 \cup \Sigma_{\OO}^{\geq 3}$, where $\Sigma_{\OO}^2$ is the locus of $2$--torsion, and $\Sigma_{\OO}^{\geq 3}$ is the rest of the singular locus. If $\Sigma_{\OO}^2 = \emptyset$, Theorem \ref{thm:3null} immediately applies. Similarly, if $\Sigma_{\OO}^{\geq 3} = \emptyset$, then $\OO$ has a two-sheeted manifold cover by Lemma \ref{lemma:branched-cover}, hence its volume is at least $\vol(M_W)/2 > 0.47$. Thus we may assume that each of $\Sigma_{\OO}^2$ and $\Sigma_{\OO}^{\geq 3}$ is non-empty.

By Lemma \ref{lemma:branched-cover}, there is a two-fold branched cover $Y \to X_\OO$, branched over $\Sigma_{\OO}^2$. We may pull back the hyperbolic metric on $\OO$ to give a singular hyperbolic metric on $Y$. Since the cover is branched over $\Sigma_{\OO}^2$, each component of $\Sigma_{\OO}^2$ pulls back to a non-singular geodesic. Meanwhile, each component of $\Sigma_{\OO}^{\geq 3}$ is disjoint from the branching locus, hence pulls back to a singular geodesic with the same label as in $\OO$. This gives us a hyperbolic orbifold $\PP$, whose base space is $Y$, and whose singular locus $\Sigma_\PP$ is the preimage of $\Sigma_{\OO}^{\geq 3}$. Note that $\PP$ has no $2$--torsion.

Since the singular locus $\Sigma_{\OO}^{\geq 3}$ is homologically trivial in $H_1(X_\OO, \ZZ_3)$, there is a $2$--chain $\omega$ with $\ZZ_3$ coefficients, such that $\bdy \omega = \Sigma_{\OO}^{\geq 3}$. Without loss of generality, this $2$--chain is smooth, intersects  $\Sigma_{\OO}^2$ transversely, and has a vertex at each point of intersection with $\Sigma_{\OO}^2$. Then $\omega$ pulls back to a $2$--chain $\eta$ in $Y$, also with $\ZZ_3$ coefficients, such that $\bdy \eta = \Sigma_\PP$, i.e.\ the preimage of $\Sigma_{\OO}^{\geq 3}$ in $\PP$. Now, we may apply Theorem \ref{thm:3null} to $\PP$ and conclude
$$\vol(\OO) \: = \: \vol(\PP) / 2 \: \geq \: \vol(M_W) / 6 \: = \: 0.15711...,$$ with equality if
and only if $\PP = \OK$. Finally, it's easy to check (by examining the symmetry group) that $\OL$ is the only two-fold quotient of $\OK$ that is a link orbifold.
\end{proof}

\section{Drilling and filling}\label{s:drill-fill}

The main goal of this section is to prove Theorem~\ref{thm:2torsion-nullhom}, which was stated in the introduction. Most of the effort will go toward proving the second statement of that theorem: 

\begin{theorem}\label{thm:no2torsion}
Let $\OO$ be a hyperbolic link orbifold without any $2$--torsion.  Then $\vol(\OO) \geq
0.2371.$ 
\end{theorem}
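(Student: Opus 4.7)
The strategy is the drill-and-fill approach outlined in the introduction. First, by Proposition \ref{prop:reduce-no2}, we may reduce to orbifolds whose only torsion is order $3$: if the reduced orbifold $\OO'$ fails to be hyperbolic, then already $\vol(\OO) \geq 0.4408...$, well above the required bound; otherwise $\vol(\OO) \geq \vol(\OO')$ by the Schl\"afli inequality, and we may replace $\OO$ by $\OO'$. By Lemma \ref{lemma:no-cusps}, we may further assume that $X_\OO$ is closed, since otherwise $\vol(\OO) \geq v_3/2 > 0.5$. Hence we assume $\OO$ is a closed hyperbolic link orbifold all of whose singular components have order $3$.

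Next, drill the singular locus $\Sigma_\OO$ to obtain a cusped hyperbolic $3$-manifold $N$: each order-$3$ singular geodesic becomes a torus cusp, carrying a meridian slope along which $(3,0)$ orbifold Dehn filling recovers $\OO$. Since $N$ is a cusped orientable hyperbolic $3$-manifold, the Cao--Meyerhoff theorem gives $\vol(N) \geq 2v_3 \approx 2.0299$. The plan is now to compare $\vol(\OO)$ with $\vol(N)$ via the drilling and filling theorems. In the generic case, either Proposition \ref{drillbound} (the Agol--Dunfield drilling estimate) or the Futer--Kalfagianni--Purcell filling estimate bounds the ratio $\vol(\OO)/\vol(N)$ from below by a constant large enough that $\vol(\OO) \geq 0.2371$. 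The hypothesis needed in either direction is that each order-$3$ singular axis of $\OO$ admits an embedded tube of radius above an explicit threshold, equivalently that each meridian slope on $N$ is sufficiently long. The Gehring--Martin collar estimates ensure that this generic condition holds in all but a finite list of situations.

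The main obstacle, following Gabai--Meyerhoff--Milley, lies in the finite set of exceptional cases in which some order-$3$ axis has a small embedded collar, or equivalently some filling slope on $N$ is short. By the work of Gehring--Maclachlan--Martin--Reid, combined with a rigorous enumeration using the program Tube (see Appendix \ref{s:smalltube}), these exceptions form an explicit finite list of cusped manifolds $N$ together with candidate short-slope fillings. For each pair $(N, \text{slope})$ in this list, we verify the inequality $\vol(\OO) \geq 0.2371$ rigorously using Snap and the methods of Moser, invoking Milley's enumeration of small-volume cusped manifolds (cf.\ Proposition \ref{prop:moser-search}). Inspection of this finite list confirms the bound in every exceptional case, completing the proof. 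The hard part is ensuring that the enumeration of exceptions is genuinely exhaustive, and that the finitely many volume computations can be converted to mathematically rigorous inequalities.
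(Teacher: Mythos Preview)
Your overall architecture mirrors the paper's, but the ``generic'' branch has a numerical gap that does not close.  You bound $\vol(N)$ from below by Cao--Meyerhoff, $\vol(N)\geq 2v_3\approx 2.0299$, and then want Proposition~\ref{drillbound} to force $\vol(\OO)\geq 0.2371$.  But the only collar--radius threshold at which Appendix~\ref{s:smalltube} (via the GMMR classification) yields a finite list of exceptions is $r\leq 0.294$; at $r=0.294$ the drilling multiplier $\coth^3(2r)\bigl(1+0.91/\cosh 2r\bigr)$ is about $12.01$, and $2.0299/12.01\approx 0.169$, well short of $0.2371$.  Raising the threshold above $0.294$ would make the numbers work, but then GMMR no longer guarantees the exceptional set is finite.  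Your fallback of using the FKP filling estimate in the generic case does not help either: that estimate requires the filling slope to have length $>2\pi$, and there is no a~priori lower bound on the length of the $(3,0)$ slope on a cusp of $N$ (a meridian can be as short as $1$, so the slope can have length near $3$).  The claimed equivalence between ``large collar in $\OO$'' and ``long slope on $N$'' is not established.

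The paper repairs this by replacing Cao--Meyerhoff with the Gabai--Meyerhoff--Milley enumeration (Theorem~\ref{thm:moms}): either $\vol(\MO)\geq 2.848$, in which case $2.848/12.011>0.2371$, or $\MO$ is one of exactly ten census manifolds.  In the latter case Proposition~\ref{prop:moser-search} checks all order--$3$ fillings of those ten manifolds directly.  Thus the actual case split is threefold: small collar (Appendix~\ref{s:smalltube}); large collar and $\vol(\MO)\geq 2.848$ (drilling bound alone suffices); large collar and $\MO\in\{{\tt m003},\ldots,{\tt m017}\}$ (finite check via Proposition~\ref{prop:moser-search}).  You collapsed the last two into one and lost exactly the margin needed.
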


\begin{proof}[Proof of Theorem \ref{thm:2torsion-nullhom}, assuming Theorem \ref{thm:no2torsion}]
Let $\OO$ be a link orbifold whose $2$--torsion locus is null-homologous. By Lemma \ref{lemma:no-cusps}, the base space $X_\OO$ is closed, or else $\vol(\OO)$ is already larger than required.
Then Lemma~\ref{lemma:branched-cover} implies there is a double cover $\PP \to \OO$, in which the $2$--torsion locus of $\OO$ is covered by non-singular geodesics. Thus $\PP$ has no $2$--torsion at all, hence $\vol(\OO) \: \geq \: \vol(\PP) / 2 \: \geq \: 0.2371/2.$
\end{proof}

After ruling out some special cases, the proof of Theorem \ref{thm:no2torsion} proceeds in four steps:
\begin{enumerate}[$1.$]
\item\label{step:drill} Drill out the singular locus $\Sigma_\OO$, producing a cusped manifold $\MO$. Estimate the change in volume under this drilling operation, using the work of Agol and Dunfield \cite{ast}.
\item\label{step:mom} Apply the work of Gabai, Meyerhoff, and Milley \cite{moms} to estimate the volume of $\MO$.
\item\label{step:fill} Fill in the singular locus $\Sigma_\OO$ to recover $\OO$. If the Dehn filling curves are long, the change in volume can be bounded using results of Futer, Kalfagianni, and Purcell \cite{fkp:volume}.
\item\label{step:moser} If the Dehn filling curves are short, there are finitely many possibilities. These finitely many cases can be checked by a rigorous computer search, as in Milley \cite{milley}.
\end{enumerate}

This proof strategy is quite similar to the one employed by Gabai, Meyerhoff, and Milley to prove the minimality of the Weeks manifold \cite{moms, milley}.
Just as in their proof, the first step in our argument relies on having lower bounds on the \emph{collar radius} about the singular locus $\Sigma_\OO$. Before proceeding further, we define what this means.

\begin{define}\label{def:simple-collar}
Let $\OO = \HH^3 / \Gamma$ be a hyperbolic $3$--orbifold. A set $S \subset \HH^3$ is called \emph{precisely invariant} under $\Gamma$ if, for all $g \in \Gamma$, $g(S)$ either coincides with $S$ or is disjoint from $S$. A geodesic $\alpha$ in the singular locus of $\OO$ is called \emph{simple} if an arbitrary lift of $\alpha$ to $\HH^3$ is precisely invariant --- in other words, if the lifts of $\alpha$ are disjoint geodesics in $\HH^3$. Note that if $\OO$ is a link orbifold, every component of $\Sigma_\OO$ is simple.

Let $\alpha \subset \Sigma_\OO$ be a simple geodesic, and let $\widetilde{\alpha} \subset \HH^3$ be one  lift of $\alpha$. The \emph{collar radius} of $\alpha$ is the supremum of all $r$ with the property that an $r$--neighborhood of $\widetilde{\alpha}$ is precisely invariant. 
\end{define}

The collar radius of a disjoint union of geodesics is defined in the same
way. In particular, for a link orbifold $\OO$, the collar radius of
$\Sigma_\OO$ is at least $r$ exactly when the $r$--neighborhood of $\Sigma_\OO$ is a disjoint union of tubes.

 Step \ref{step:drill} of our proof strategy is the following result of Agol and Dunfield \cite[Theorem 10.1]{ast}, which was conveniently reformulated by 
Agol, Culler, and Shalen \cite[Lemma 3.1]{agol-culler-shalen}.  We will need to 
generalize the result to orbifolds, using
a straightforward application of Selberg's Lemma.

\begin{proposition}\label{drillbound}
Let $\OO$ be a finite--volume, orientable, hyperbolic $3$--orbifold, and let $L$ be a geodesic link in $\OO$.  Suppose that the collar radius of $L$ is bounded below by $r > 0$. Let $\QQ$ be the cusped hyperbolic orbifold obtained by drilling out $L$. Then
$$\vol(\QQ) \: \leq \:  (\coth^3{2r}) \left( 1 + \frac{0.91}{\cosh{2r}} \right) \vol(\OO) $$
\end{proposition}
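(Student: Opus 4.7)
The result for hyperbolic manifolds is already available in the literature: Agol and Dunfield \cite{ast} proved exactly this drilling inequality in the manifold setting, and Agol--Culler--Shalen \cite{agol-culler-shalen} restated it in the cleaner form used above. So the plan is to reduce the orbifold statement to the manifold statement via a torsion-free finite cover supplied by Selberg's Lemma, and then use multiplicativity of volume under covers.

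Concretely, write $\OO = \HH^3/\Gamma$ and apply Selberg's Lemma to produce a torsion-free normal subgroup $\Gamma_0 \trianglelefteq \Gamma$ of some finite index $d$. Then $\wt\OO = \HH^3/\Gamma_0$ is a finite-volume hyperbolic $3$-\emph{manifold} covering $\OO$ by degree $d$, and the preimage $\wt L \subset \wt\OO$ of the geodesic link $L$ is itself a geodesic link. The key point to verify is that the collar radius of $\wt L$ in $\wt\OO$ is still at least $r$. This is immediate from the definition: the lifts of $\wt L$ to $\HH^3$ coincide (as a set of hyperbolic geodesics) with the lifts of $L$, and precise invariance of the $r$-neighborhood of a chosen lift under the larger group $\Gamma$ trivially implies precise invariance under the subgroup $\Gamma_0$.

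Now apply the Agol--Dunfield drilling theorem, in the Agol--Culler--Shalen formulation, to the manifold $\wt\OO$ and the geodesic link $\wt L$. Writing $\wt \QQ$ for the cusped manifold obtained by drilling $\wt L$ from $\wt\OO$, we obtain
$$\vol(\wt\QQ) \; \leq \; (\coth^3 2r)\left(1 + \frac{0.91}{\cosh 2r}\right) \vol(\wt\OO).$$
Finally, drilling commutes with covers: $\wt\QQ$ is a degree-$d$ cover of $\QQ$, since $\wt\OO \smallsetminus \wt L \to \OO \smallsetminus L$ is still a degree-$d$ covering. Hence $\vol(\wt\QQ) = d \cdot \vol(\QQ)$ and $\vol(\wt\OO) = d \cdot \vol(\OO)$, and dividing both sides of the displayed inequality by $d$ yields exactly the claimed bound.

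There is no genuine obstacle here; the only thing to be careful about is checking that passing to the torsion-free cover does not decrease the collar radius, which follows essentially from the definitions. The proof should be only a few lines.
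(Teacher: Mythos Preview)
Your proposal is correct and follows essentially the same route as the paper: pass to a finite manifold cover via Selberg, observe that the collar radius can only increase, apply the manifold drilling inequality, and divide by the covering degree. The only cosmetic difference is that the paper unpacks the Agol--Culler--Shalen formulation into the original Agol--Dunfield inequality plus Przeworski's bound $\vol(T)\leq 0.91\,\vol(M)$ and a monotonicity step, whereas you cite the packaged version directly.
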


\begin{proof}
By Selberg's lemma \cite{alperin:selberg-lemma}, $\OO$ is finitely covered by a hyperbolic manifold $M$. Let $n$ be the degree of the cover.
 Let $L$ be a closed geodesic link in $\OO$, and $\widetilde{L}$
the preimage of $L$ in $M$.  Then $N = M \setminus \widetilde{L}$ is an 
$n$--sheeted cover of $\QQ$.  Let $T \subset M$ be a maximum--radius embedded tube about $\widetilde{L}$. Since an embedded tube in $\OO$ pulls back to an embedded tube in $M$, the radius of $T$ is $r_M \geq r$. With this notation, Agol and Dunfield \cite[Theorem 10.1]{ast} showed that
$$\vol(N) \:  \leq \:  \coth^3(2r_M) \left(\vol(M) + \frac{\vol(T)}{\cosh(2r_M)} \right).$$
(See \cite[page 2299]{agol-culler-shalen} for this formulation of the Agol--Dunfield estimate.)

Now, we apply an estimate of Przeworski \cite[Corollary 4.4]{przeworski}, who showed that
$$\vol(T) \leq 0.91 \, \vol(M).$$
Substituting this into the previous equation, we obtain
\setlength{\jot}{1.3ex}
\begin{eqnarray*}
\vol(N) 
& \leq & \coth^3(2r_M) \left(1 + \frac{0.91}{\cosh(2r_M)}\right)  \vol(M) \\
& \leq & \coth^3(2r) \left(1 + \frac{0.91}{\cosh 2r}\right)  \vol(M),
\end{eqnarray*}
where the last inequality follows because the function  $f(x) = \coth^3(2x) \left(1 + \frac{0.91}{\cosh(2x)}\right)$ is
increasing in $x$. Finally, since volume is   multiplicative under taking finite covers, we have
$\vol(N) = n \vol(\QQ)$ and $\vol(M) = n \vol(\OO)$, completing the proof.
\end{proof}

Step \ref{step:mom} in the proof of Theorem \ref{thm:no2torsion} relies on a result of
Gabai, Meyerhoff, and Milley \cite[Corollary 1.2]{moms}, which we restate in a slightly more general form.

\begin{theorem}\label{thm:moms}
Let $N$ be a cusped orientable hyperbolic $3$--manifold with volume no
more than $2.848.$  Then $N$ has exactly one cusp, and is one of the Snappea census manifolds $\tt m003$,
$\tt m004$, $\tt m006$, $\tt m007$, $\tt m009$, $\tt m010$, $\tt m011$, $\tt m015$, $\tt m016$, or $\tt m017$.
\end{theorem}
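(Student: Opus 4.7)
The plan is straightforward: Theorem~\ref{thm:moms} augments \cite[Corollary 1.2]{moms} only by dropping the hypothesis that $N$ has exactly one cusp, so it suffices to verify that the volume bound already forces $N$ to be one-cusped, and then to invoke the Gabai--Meyerhoff--Milley enumeration verbatim.

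First I would show that $\vol(N) \leq 2.848$ forces $N$ to have a single cusp. This follows from Agol's theorem identifying the smallest-volume cusped orientable hyperbolic $3$--manifolds with more than one cusp: the Whitehead link complement and the $(-2,3,8)$--pretzel link complement, each of volume $V_{\mathrm{oct}} = 3.66386\ldots$, the volume of a regular ideal octahedron. Since $V_{\mathrm{oct}} > 2.848$, every orientable cusped hyperbolic $3$--manifold with two or more cusps exceeds the stated volume bound, so $N$ must be one-cusped.

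With $N$ reduced to the one-cusped case, the conclusion is precisely the content of \cite[Corollary 1.2]{moms}: Gabai, Meyerhoff, and Milley enumerate the one-cusped orientable hyperbolic $3$--manifolds of volume at most $2.848$ as Dehn fillings on the internal cusps of Mom-$n$ manifolds with $n \leq 3$, and their census-based classification produces precisely the ten SnapPea manifolds in the list.

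The only delicate point is citing the correct form of Agol's two-cusped lower bound; beyond that, there is no real obstacle, since the theorem amounts to a cosmetic repackaging of the original corollary with the one-cusp hypothesis absorbed into the volume bound. If a self-contained alternative is preferred, one could instead combine Meyerhoff's maximal-cusp estimate $\vol(C) \geq \sqrt{3}/4$ with B\"or\"oczky's horoball-density bound (as in the proof of Lemma~\ref{lemma:no-cusps}) to rule out large cusp multiplicities, and handle the remaining small multiplicities by direct inspection of the low-volume portion of the SnapPea cusped census.
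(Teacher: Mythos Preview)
Your proposal is correct, and your primary argument---citing Agol's identification of the minimal-volume multi-cusped orientable hyperbolic $3$--manifolds at volume $3.6638\ldots$---is one of the two justifications the paper itself gives. The paper, however, leads with a more self-contained observation that you do not mention: if a multi-cusped manifold $N$ with $\vol(N)\leq 2.848$ existed, then Dehn filling all but one of its cusps would produce infinitely many distinct one-cusped manifolds of volume at most $2.848$, contradicting the finiteness of the Gabai--Meyerhoff--Milley list. This argument has the advantage of needing no external citation beyond \cite{moms} itself; the paper then records Agol's result as corroboration. Your suggested alternative via Meyerhoff and B\"or\"oczky is weaker and would not by itself close the gap, but it is not needed.
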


We note that although \cite[Corollary 1.2]{moms} is only stated for $1$--cusped manifolds, in fact the result \emph{implies} that $N$ has one cusp. For,  any  multi-cusped manifold of volume $ \leq 2.848$ would have an infinite sequence of $1$--cusped Dehn fillings also satisfying this volume bound, which would contradict their enumeration. In fact, Agol showed 
\cite{agol-2cusped} the smallest volume multi-cusped manifold has $\vol(N) = 3.6638...$.

\smallskip

Step \ref{step:fill} in the proof of Theorem \ref{thm:no2torsion} is to estimate the change in volume under Dehn filling. 

We recall some standard notation about orbifold Dehn filling. If $\QQ$ is a hyperbolic orbifold with a torus cusp $C$, a \emph{slope} $s$ is an unoriented, non-trivial homology class on $\bdy C$. That is, if $H_1(\bdy C) \cong \ZZ^2$ is endowed with a basis $\langle \mu, \lambda \rangle$, then $s$ can be written as $\pm (p \mu + q \lambda)$, where $p$ and $q$ need not be relatively prime. If $p$ and $q$ happen to be coprime, then the Dehn filling $\QQ(s)$ is the result of attaching a non-singular solid torus to $\QQ$, whose meridian disk is mapped to $s$. If $p$ and $q$ are \emph{not} coprime, then $\QQ(s)$ is the result of attaching a singular solid torus to $\QQ$, with the core curve carrying the torsion label $\gcd(p,q)$.

If $s_1, \ldots, s_k$ are slopes on multiple cusps of $\QQ$, the Dehn filling  $\QQ(s_1, ..., s_k)$ is defined in exactly the same way. With this notation, we can state the following mild generalization of a theorem of
Futer, Kalfagianni, and Purcell \cite[Theorem 1.1]{fkp:volume}.

\begin{theorem}\label{thm:fkp}
Let $\QQ$ be a complete, finite--volume hyperbolic $3$--orbifold with torus
cusps.  Suppose that $C_1, ..., C_k$ are disjoint horoball neighborhoods of
some subset of the cusps.  Let $s_1, ..., s_k$ be slopes on $\partial C_1,
..., \partial C_k,$ each with length greater than $2\pi$.  Denote the
minimal slope length by $\lmin$.  Then $\QQ(s_1, ..., s_k)$ is
hyperbolic, and
$$\vol(\QQ(s_1, ..., s_k)) \geq \left( 1- \left(
\frac{2\pi}{\lmin}\right)^2 \right)^{3/2} \vol(\QQ).$$
\end{theorem}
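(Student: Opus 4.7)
The plan is to reduce to the manifold version of the theorem via a finite manifold cover. First I will invoke Selberg's lemma on the filled orbifold to obtain a manifold cover $N \to \QQ(s_1,\ldots,s_k)$ of some degree $n$; removing the preimages of the singular cores yields an unbranched cover $M \to \QQ$ of the same degree, and $N$ is recovered from $M$ by Dehn filling along primitive slopes on the cusps of $M$. For this setup I need $\QQ(s_1,\ldots,s_k)$ to be hyperbolic, which follows from the orbifold version of Thurston's hyperbolic Dehn surgery theorem since each slope has length greater than $2\pi$.

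The heart of the argument is a length computation for the lifted filling slopes. Write $s_i = p_i\mu_i + q_i\lambda_i$ and $k_i := \gcd(p_i,q_i)$, so the geometric meridian of the singular filling at $C_i$ is the primitive slope $s_i' := (p_i/k_i)\mu_i + (q_i/k_i)\lambda_i$, of length $\length(s_i)/k_i$ on $\partial C_i$. Each horoball neighborhood $C_i$ pulls back to a disjoint union of embedded horoball neighborhoods $\widetilde{C}_{i,j}$ on the cusps of $M$ above $C_i$, with the covering map a local isometry. On each $\widetilde{C}_{i,j}$, the filling slope $\widetilde{s}_{i,j}$ bounds a disk in $N$ whose projection to $\QQ(s_1,\ldots,s_k)$ is the meridian disk of the singular solid torus in $\QQ(s_i)$; this projection is a branched cover of degree $k_i$, with branching concentrated at the core. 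The restriction to boundary circles is therefore an unbranched degree-$k_i$ cover, so $\widetilde{s}_{i,j}$ wraps $k_i$ times around $s_i'$ under the local isometry, giving
$$\length(\widetilde{s}_{i,j}) \: = \: k_i \cdot \length(s_i') \: = \: \length(s_i).$$
In particular, the minimum filling slope length on $M$ is exactly $\lmin$.

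To finish, I will apply the manifold version of the theorem \cite[Theorem~1.1]{fkp:volume} to $M$, with horoball neighborhoods $\widetilde{C}_{i,j}$ and filling slopes $\widetilde{s}_{i,j}$; this yields $\vol(N)\geq(1-(2\pi/\lmin)^2)^{3/2}\vol(M)$. Dividing through by $n$ and using multiplicativity of volume under finite covers then gives the desired estimate for $\vol(\QQ(s_1,\ldots,s_k))$ in terms of $\vol(\QQ)$. The main obstacle is the slope-length equality $\length(\widetilde{s}_{i,j}) = \length(s_i)$, which requires care with the local structure of the branched cover near each singular core; establishing hyperbolicity of the filled orbifold to invoke Selberg is a secondary concern, handled by the orbifold version of Thurston's Dehn filling theorem.
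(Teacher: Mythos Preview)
Your proposal is correct and follows essentially the same route as the paper: pass to a finite manifold cover of the filled orbifold via Selberg's lemma, check that each lifted filling slope has the same length as the original (via the branched-cover structure on meridian disks, exactly as you describe), apply \cite[Theorem~1.1]{fkp:volume} to the manifold cover, and divide by the covering degree. The only difference is one of ordering: the paper applies Selberg to $\QQ(s_1,\ldots,s_k)$ first and then deduces its hyperbolicity from that of the manifold cover, rather than invoking a separate orbifold Dehn filling statement upfront as you do.
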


\begin{proof}
By Selberg's lemma, there exist finite--sheeted manifold covers of
$\QQ(s_1, ... , s_k)$.  Let $\pi: M \to \QQ(s_1, ... , s_k)$ be one such covering map,
of degree $n$.  
Then $N = \pi^{-1}(\QQ)$ is an $n$--sheeted manifold cover of $\QQ$.
The hyperbolic metric on $\QQ$ pulls back to a non-singular, complete,
hyperbolic metric on $N$.

Every slope $s_i$ on a cusp torus $\bdy C_i$ can be realized by a (not necessarily simple) Euclidean geodesic. Abusing notation slightly, we denote this geodesic by $s_i$. We claim that, for every cusp $C_i^j$ covering $C_i$, the geodesic $s_i$ lifts to a \emph{simple} Euclidean geodesic of the same length.

Consider a solid torus of $V \subset M \setminus N$. Then the image $\pi(V)$ is a (possibly singular) solid torus of $\QQ(s_1, ... , s_k) \setminus \QQ$. By construction, the meridian disk of $V$ covers the meridian disk of $\pi(V)$, with the meridian circle of $V$ mapping to one of the $s_i$. If the solid torus $\pi(V)$ is non-singular,  $s_i$ will be a primitive slope, and the restriction of $\pi$ to a meridian disk is $1$--$1$. Thus the lift of $s_i$ to $\bdy V$ is a simple closed Euclidean geodesic of the same length.

If $\pi(V)$ is a singular solid torus, its core will have a cone singularity of order $n_i > 1$. Then the restriction of $\pi$ to a meridian disk is an $n_i$--sheeted cover of a singular disk, which restricts to an $n_i$--sheeted cover on the boundary circle. Meanwhile, having a singularity of order $n_i$ means that $s_i = n_i \overline{s_i}$, where $\overline{s_i}$ is a primitive slope on $C_i$. Thus the lift of $s_i$ to $\bdy V$ is a meridian of $V$, which is a simple Euclidean geodesic of length $n_i \ell(\overline{s_i}) = \ell(s_i)$.

We conclude that the set of slope lengths in
$N$ is the same as the set slope lengths in $\QQ$.  Thus, by hypothesis, the shortest slope length is $\lmin > 2\pi$. This implies that $M$ is hyperbolic, hence its quotient $\QQ(s_1, ... , s_k)$ is hyperbolic as well. The volume estimate of the theorem follows  follows from that of
Futer--Kalfagianni--Purcell \cite[Theorem 1.1]{fkp:volume} by noting that $\vol(N) = n \vol(\QQ)$ and that
$\vol(M) = n \vol(\QQ(s_1,..., s_k))$.
\end{proof}

Finally, the last step in the proof of Theorem \ref{thm:no2torsion} is the following technical result.

\begin{proposition}\label{prop:moser-search}
Let $M$ be one of the ten one-cusped manifolds enumerated in Theorem \ref{thm:moms}. Let $\OO$ be a hyperbolic link orbifold with $3$--torsion only, created by filling a cusp of $M$. Then $\vol(\OO) \geq 0.31423...$, with
equality if and only if $\OO = \OK$, the orbifold of Figure \ref{fig:knots}, left.
\end{proposition}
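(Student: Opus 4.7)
The plan is to reduce the proposition to a finite computer verification, using the Futer--Kalfagianni--Purcell volume bound to eliminate all but finitely many filling slopes, and then rigorously checking the remaining short-slope cases with Snap and Moser's interval-arithmetic techniques. Fix one of the ten census manifolds $M$ of Theorem \ref{thm:moms}; each satisfies $\vol(M) \leq 2.848$. Because $\OO$ is a link orbifold with only $3$--torsion obtained by filling the cusp of $M$, the filling slope must have the form $s = 3\sigma$ for some primitive slope $\sigma$ on $\bdy M$. Using the maximal horospherical cusp neighborhood of $M$ (whose geometry can be certified by Moser's algorithms \cite{moser:thesis}), Theorem \ref{thm:fkp} yields
$$\vol(\OO) \geq \left(1 - \left(\frac{2\pi}{\ell(s)}\right)^2\right)^{3/2} \vol(M),$$
so there is an explicit length threshold $L_M$, depending only on $\vol(M)$, above which the right--hand side already exceeds $0.31423... = \vol(\OK)$. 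This leaves, for each $M$, only finitely many slopes $s = 3\sigma$ of length below $L_M$ to analyze directly.

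The next step is a finite case analysis. For each of the ten manifolds $M$ and each remaining short slope $s = 3\sigma$, I would form the filled orbifold $M(s)$ in Snap and determine (i) whether it is a hyperbolic link orbifold whose only torsion is $3$--torsion, and if so (ii) a certified lower bound on $\vol(M(s))$ via interval arithmetic. Any non--hyperbolic filling, or any filling in which the core of the solid torus fails to be a simple geodesic of order $3$, is discarded, since the hypothesis requires $\OO$ to be a hyperbolic link orbifold with $3$--torsion. Among the surviving hyperbolic candidates, one verifies that every orbifold other than $\OK$ has rigorously certified volume strictly greater than $\vol(\OK)$.

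The orbifold $\OK$ itself is guaranteed to occur in this enumeration: the complement of the $5_2$ knot appears in the list of Theorem \ref{thm:moms} (as $\tt m015$), and $\OK$ is the $(3,0)$ filling of this cusped manifold, since $M_W$ is the three-fold cyclic cover of $S^3$ branched over the $5_2$ knot and hence $\vol(\OK) = \vol(M_W)/3 = 0.31423...$. Thus $\OK$ is present in the short list and realizes the asserted minimum; the remaining task is to show that it is the \emph{unique} minimizer, which follows once every other short filling across all ten $M$'s is checked to exceed this volume.

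The main obstacle is rigor rather than conceptual difficulty. Snap's floating--point computations of volumes and slope lengths must be upgraded to provably correct intervals in the style of Milley \cite{milley} and Moser \cite{moser:thesis}, so that the strict inequalities $\vol(M(s)) > \vol(\OK)$ are trustworthy for the competing short fillings. A secondary concern is accurately enumerating the short slopes on each $M$ (including those that yield exceptional fillings, Seifert fibered orbifolds, or collapsed structures) and verifying that no such degenerate case could spuriously produce a hyperbolic link orbifold of volume at most $\vol(\OK)$. The explicit enumeration, the Snap input/output, and the certified volume intervals form the content of the auxiliary data \cite{af-data}, and completion of these rigorous checks finishes the proof.
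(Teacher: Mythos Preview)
Your overall strategy matches the paper's proof almost exactly: use Theorem~\ref{thm:fkp} to reduce to finitely many slopes with $\gcd(p,q)=3$, then certify the remaining fillings using Snap together with the Milley--Moser interval-arithmetic machinery. The paper sets $B=0.32$ and obtains $144$ candidate triples $(M,p,q)$, just as you describe.

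There is, however, one concrete technical point that your proposal glosses over and that the paper handles by a separate argument. Moser's certification algorithm requires a \emph{positively oriented} solution to the gluing equations; it cannot certify a hyperbolic structure when Snap returns negatively oriented tetrahedra. In the paper's enumeration this occurs for four fillings: ${\tt m006}(3,0)$, ${\tt m007}(3,0)$, ${\tt m009}(3,0)$, and ${\tt m015}(3,0)$. All four are genuinely hyperbolic, so they are not ``exceptional'' or ``degenerate'' in the sense you flag, yet the interval-arithmetic route you propose does not apply to them. Crucially, the minimizer $\OK = {\tt m015}(3,0)$ is itself one of these cases. The paper disposes of all four at once by the homological argument of Lemma~\ref{lemma:branched-cover} and Example~\ref{ex:meridian-filling}: each of these $(3,0)$ fillings has a $3$--fold manifold cover, hence volume at least $\vol(M_W)/3$, with equality exactly for $\OK$. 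Your proposal would need this (or an equivalent) ingredient to close the argument.
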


\begin{proof}
Suppose $M$ is one of the cusped manifolds in Theorem \ref{thm:moms}, and $B < \vol(M)$. By Theorem \ref{thm:fkp}, if an orbifold filling $\OO = M(s)$ has volume at most $B$, then the filling slope $s = p\mu + q\lambda$ must be drawn from the set
$$ \FF_M(B) := \left\{ (p,q) \in \ZZ^2 \:  : \: 1- \left(
\frac{2\pi}{\ell(p\mu + q \lambda)}\right)^2 \leq \left( \frac{B}{\vol(M)} \right)^{2/3} \right\} .$$
This is a finite set of integer pairs. For our purposes, we set $B = 0.32$ and consider the fillings that yield orbifolds with $3$--torsion. That is, for each $M$, we consider the set 
$$\FF_M^3(B) = \{(p,q) \in \ZZ^2 \mid (p,q) \in \FF_M(B) \text{ and }
\gcd(p,q) = 3\}.$$
There are a total of $144$ triples $(M,p,q)$ where $M$ is one of the census manifolds of Theorem \ref{thm:moms} and $(p,q) \in \FF_M^3(0.32)$. See the accompanying data set \cite{af-data} for a description of these slopes. Note that we chose the rounded value $B=0.32$ to account for possible round-off error in computing slope lengths.

For each $M$, and each $(p,q) \in
\FF_M^3(B)$, we must check that $\vol(M(s)) \geq \vol(\OK)$.  This can be accomplished by a rigorous computer
search, using code developed by Milley \cite{milley} and Moser \cite{moser:thesis}.  We outline the method briefly, referring to Milley  \cite{milley} for more details. 

For each slope $s=p \mu + q \lambda$, we use Snap \cite{snap-tube} to
attempt to find an ideal hyperbolic triangulation of the orbifold filling $M(s)$. This means that, starting with the canonical triangulation of $M$, Snap attempts to find a solution to the gluing equations in which the holonomy of $s$ is a $2\pi$ rotation. This search can lead to one of three results:
\begin{itemize}
\item Snap may fail to find a solution to the gluing equations.
\item Snap may find an (approximate) solution to the gluing equations, in which all tetrahedra are positively oriented.
\item Snap may find an (approximate) solution to the gluing equations, in which one or more tetrahedra are negatively oriented.
\end{itemize}
We consider each of these in turn.

The only filling for which Snap fails to find a solution to the gluing equations is the $(3,0)$ filling of $\tt m004$. This is the orbifold with base space $S^3$ and singular locus the figure--8 knot labeled $3$, as depicted in Figure \ref{fig:dunbar}, left. As detailed in  \cite[Example 2.33]{chk:orbifold}, this orbifold is Euclidean, and in particular non-hyperbolic.

There are $139$ fillings for which Snap finds a positively oriented
approximate solution to the gluing equations. In each case, we use Milley's
implementation of Moser's algorithm to rigorously verify that there is an
actual positively oriented solution near the solution reported by Snap
\cite{milley, moser:thesis}.  Moser's algorithm also gives an upper bound on the distance between an approximate solution and the true solution. Then, Milley's code uses this upper bound to rigorously verify that the volume
of each filled orbifold is at least $B = 0.32$.

Finally, there are $4$ fillings for which Snap finds a solution with negatively oriented tetrahedra. These are 
$\tt m006(3,0)$, $\tt m007(3,0)$, $\tt m009(3,0)$, and $\tt m015(3,0)$. Each of these orbifolds $\OO$ is discussed in Example \ref{ex:meridian-filling}. In particular, by Example \ref{ex:meridian-filling} and Lemma \ref{lemma:branched-cover}, there is a threefold cyclic cover of $X_\OO$, branched over $\Sigma_\OO$. In other words, the orbifold $\OO$ is triply covered by a closed hyperbolic manifold. Therefore, as in Theorem 
\ref{thm:3null}, we have $\vol(\OO) \geq \vol(M_W)/3$, with
equality if and only if $\OO = \OK$. We remark that $\tt m015(3,0)$ is the minimal orbifold $\OK$.
\end{proof}

We can now combine these ingredients to prove Theorem \ref{thm:no2torsion}.

\begin{proof}[Proof of Theorem~\ref{thm:no2torsion}]
Let $\OO$ be a hyperbolic link orbifold without any $2$--torsion. As in Definition \ref{def:reduction}, let $\OO'$ be the reduced orbifold obtained from $\OO$ by changing all torsion orders to $3$. Proposition  \ref{prop:reduce-no2} says that either $\OO'$ is hyperbolic, in which case $\vol(\OO) \geq \vol(\OO')$, or else $\vol(\OO)$ is much larger than $0.2371$. Thus, for the purpose of proving $\vol(\OO) \geq 0.2371$, it suffices to assume that $\OO$ has $3$--torsion only.

Let $r$ be the collar radius of $\Sigma_\OO$. If $r \leq 0.294$, then Theorem \ref{smalltube} in the Appendix says that $\vol(\OO) \geq 0.3142$. Thus we may assume that $r > 0.294$.

Let $\MO = X_\OO \setminus
\Sigma_\OO$ be the cusped manifold obtained by drilling out the singular locus of
$\OO$.  Since $r> 0.294$,
Proposition~\ref{drillbound} says 
$$\frac{\vol(\MO)}{\vol(\OO)} \: \leq \:
\coth^3(2\cdot0.294) \left( 1 + \frac{0.91}{\cosh(2\cdot 0.294)} \right) \:< \:
12.011.$$
 Then,  if $\vol(\MO) \geq 2.848$, we conclude that $\vol(\OO) > 0.2371.$

If $\vol(\MO) < 2.848,$ then Theorem~\ref{thm:moms} implies that
$\MO$ is one of ten $1$--cusped manifolds.  
Now, Proposition \ref{prop:moser-search} gives $\vol(\OO) > 0.3142$, completing the proof. 
\end{proof}

\section{Orbifolds with high torsion}\label{s:4torsion}

The goal of this section is to prove Theorem \ref{thm:4torsion}, which identifies the unique smallest volume link orbifold without any $2$-- or $3$--torsion. The method of proof is similar to that of Theorem \ref{thm:no2torsion} in Section \ref{s:drill-fill}, but with a lot fewer special cases. As a result of the streamlined argument, we can identify the unique minimizer.

We begin with the following analogue of Proposition \ref{prop:reduction}.

\begin{lemma}\label{lemma:reduce-4}
Let $\OO$ be a hyperbolic link orbifold such that all torsion has order at least $n \geq 4$, and let $\OO'$ be the link orbifold obtained by changing all torsion labels of $\Sigma_\OO$ to $n$. Then $\OO'$ is hyperbolic. Furthermore, $\vol(\OO) \geq \vol(\OO')$, with equality if and only if $\OO = \OO'$.
\end{lemma}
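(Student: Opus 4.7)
The plan is to follow the trichotomy in the proof of Proposition \ref{prop:reduction}, observing that the hypothesis $n \geq 4$ eliminates every non-hyperbolic scenario. By the orbifold geometrization theorem \cite{blp:orbifold, chk:orbifold}, $\OO'$ is either (a) hyperbolic, (b) geometric but not hyperbolic, or (c) admits a non-trivial geometric decomposition. In case (a), the argument of Proposition \ref{prop:reduction}\eqref{i:schlafli} applies directly: Kojima's theorem \cite{kojima:cone-deformation} lets us continuously shrink the cone angles from $2\pi/n$ in $\OO'$ down to $2\pi/m_i$ in $\OO$ (where $m_i \geq n$ is the original torsion label of the $i$--th component of $\Sigma_\OO$), and the Schl\"afli formula \cite[Theorem 3.20]{chk:orbifold} gives $\vol(\OO) \geq \vol(\OO')$, with equality if and only if the deformation is trivial, i.e.\ $\OO = \OO'$.

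For case (b), I would mirror Lemma \ref{lemma:dunbar}. If $\OO'$ were Seifert fibered, then using \cite[Proposition 2.41]{chk:orbifold} together with the absence of $2$--torsion, each component of $\Sigma_{\OO'}$ must be a fiber, so the common complement $\OO' \setminus \Sigma_{\OO'} = \OO \setminus \Sigma_\OO$ inherits a Seifert fibered manifold structure. This contradicts the hyperbolicity of the complement, which follows from Kojima applied to $\OO$. Solv or Nil geometry would require an essential pillowcase (impossible, since $\OO'$ has no $2$--torsion) or an essential torus disjoint from $\Sigma$, which would also be essential in $\OO$ and contradict its hyperbolicity. The remaining possibilities are Dunbar's non--Seifert fibered Euclidean exceptions; by Lemma \ref{lemma:dunbar}, the only link orbifolds on his list are the two in Figure \ref{fig:dunbar}, both of which involve labels $2$ or $3$.

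For case (c), I would mimic Lemma \ref{lemma:turnover}. Any essential decomposition $2$--suborbifold $F' \subset \OO'$ has non-negative orbifold Euler characteristic, and its components are tori or spheres with $0 \leq c \leq 4$ cone points, all of order $n$. For $n \geq 4$, a sphere with $c \geq 3$ cone points has orbifold Euler characteristic at most $2 - 3(1-1/n) \leq -1/4 < 0$, so $F'$ is restricted to tori or spheres with at most two cone points. Essential tori and essential $2$--spheres in $\OO'$ would also be essential in $\OO$ (their essentialness depends only on topology, not labels), contradicting its hyperbolicity (atoroidality and irreducibility). An essential $(n,n)$ football in $\OO'$ corresponds to an essential meridional annulus in the complement $\OO \setminus \Sigma_\OO$, a topological object independent of the labels on $\Sigma$; such annuli are ruled out by the anannularity of the cusped hyperbolic manifold $\OO \setminus \Sigma_\OO$ (Thurston). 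The football case is the main subtlety of the argument, being the one place where one must appeal to anannularity rather than to the more standard irreducibility/atoroidality.
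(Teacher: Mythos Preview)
Your proof is correct and follows the same geometrization trichotomy as the paper's; where the paper dispatches case (c) by citing Lemma~\ref{lemma:turnover} to reduce to turnovers (which then have $\chi<0$ since $n\geq 4$), you unpack the non-turnover sub-cases explicitly and handle the essential $(n,n)$ football via anannularity of the hyperbolic complement $\OO\setminus\Sigma_\OO$ --- a clean variant of the argument implicit in Lemma~\ref{lemma:turnover}. The only small omission is the $c=1$ teardrop, which is immediate: an embedded teardrop in $\OO'$ gives one (with a possibly different label) in $\OO$, contradicting the goodness of a hyperbolic orbifold.
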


\begin{proof}
The proof closely parallels the proof of Proposition \ref{prop:reduction}, but with fewer special cases. Recall that by the orbifold geometrization theorem \cite{blp:orbifold, chk:orbifold}, $\OO'$ is either hyperbolic, or carries a Seifert fibered geometry, or else has a non-trivial geometric decomposition. We want to rule out the non-hyperbolic possibilities.

If $\OO'$ is geometric but not hyperbolic, then we argue as in Lemma \ref{lemma:dunbar}. By that argument, $\OO'$ must be one of the exceptional orbifolds enumerated by Dunbar \cite{dunbar}. But all of the orbifolds in his list have $2$-- and/or $3$--torsion, which means our $\OO'$ cannot be on his list.

If $\OO'$ is not geometric, then we argue as in Lemma \ref{lemma:turnover} to conclude that every surface in the geometric decomposition of $\OO'$ is a turnover. But a turnover with cone points labeled $n \geq 4$ has negative Euler characteristic, i.e.\ would not occur in the geometric decomposition of $\OO'$.

Since all non-hyperbolic cases lead to contradictions, $\OO'$ must be hyperbolic. By Schl\"afli's formula  \cite[Theorem
3.20]{chk:orbifold}, $\vol(\OO) \geq \vol(\OO')$, with equality if and only if $\OO = \OO'$.
\end{proof}

\begin{named}{Theorem \ref{thm:4torsion}}
Let $\OO$ be a hyperbolic link orbifold such that all torsion has order at least $4$. Then $\vol(\OO) \geq 0.5074...$, with equality if and only if $\OO$ is the figure--$8$ knot in $S^3$, labeled $4$.
\end{named}

\begin{proof}
Let $\Sigma = \Sigma_\OO$ be the singular locus of $\OO$. By Lemma \ref{lemma:reduce-4}, we may assume without loss of generality that all cone labels of $\Sigma_\OO$ are exactly $4$.  By a theorem of Gehring and Martin \cite[Theorem 4.20]{gehring-martin:commutators-collars}, the collar radius of $\Sigma_\OO$ is
$$ r \: \geq \: \frac{1}{2} \: \mathrm{arccosh} \frac{1 + \sqrt{3} }{2} \: = \: 0.4157... $$
Let $\MO = X_\OO \setminus \Sigma_\OO$ be the cusped manifold obtained by drilling out the singular locus of
$\OO$. 
By Proposition \ref{drillbound},
$$\frac{\vol(\MO)}{\vol(\OO)} \: \leq \:
\coth^3(2\cdot 0.4157) \left( 1 + \frac{0.91}{\cosh(2\cdot 0.4157)} \right) \:< \:
5.271.$$

Now, consider the volume of the cusped manifold $\MO$. If $\vol(\MO) \geq 2.7$, then
$$\vol(\OO) \: > \: 2.7 / 5.271  \: = \: 0.5122... \, ,$$ which is larger
than the desired bound. Otherwise, if $\vol(\MO) \leq 2.7$, then Theorem
\ref{thm:moms} implies that $\MO$ is one of the $6$ census manifolds $\tt m003$,
$\tt m004$, $\tt m006$, $\tt m007$, $\tt m009$ or $\tt m010$. (The other four manifolds
enumerated in that theorem have volume larger than $2.78$.) To complete the
proof, we perform a computer analysis of short Dehn fillings of these six
manifolds, as in Proposition \ref{prop:moser-search}.

Let $M = \MO$ be one of the six  census manifolds $\tt m003$, $\tt m004$, $\tt m006$,
$\tt m007$, $\tt m009$ or $\tt m010$, and let $B = 0.51$. Importing the notation of
Proposition \ref{prop:moser-search}, consider the set $$ \FF_M^4 (B) :=
\left\{ (p,q) \in \ZZ^2 \:  : \: 1- \left(
\frac{2\pi}{\ell(p\mu + q \lambda)}\right)^2 \leq \left( \frac{B}{\vol(M)}
\right)^{2/3}  \text{ and } \gcd(p,q) = 4\right\} .$$ If the link orbifold
$\OO = M(s)$ is obtained by Dehn filling $M$,  has $4$--torsion only, and
satisfies $\vol(\OO) < B = 0.51$, then Theorem \ref{thm:fkp} implies the
filling slope $s = p \mu + q \lambda$ is one of the finite set of slopes in
$\FF_M^4(B)$. In fact, there are exactly $57$ triples $(M, p, q)$ such that
$M$ is one of $\tt m003$ through $\tt m010$ and $(p,q) \in \FF_M^4(B)$. We analyze
these fillings using Snap.

There are $54$ fillings for which Snap finds a positively oriented
approximate solution to the gluing equations. In each case, Moser's algorithm
\cite{moser:thesis} gives an upper bound on the distance between this
approximate solution and a true solution that gives the hyperbolic structure.
Then, Milley's code \cite{milley} uses this upper bound to rigorously verify
that the volume of each filled orbifold is at least $B = 0.51$.

Finally, there are $3$ fillings for which Snap finds a solution with
negatively oriented tetrahedra: namely,  $\tt m004(4,0)$, $\tt m006(4,0)$, and
$\tt m009(4,0)$. For each of these, we must find an alternate way to estimate its
volume. Note that each of these orbifolds is described in Example \ref{ex:meridian-filling}, and in particular, each has a four-fold, cyclic manifold cover.

The orbifold $\OO = {\tt m004(4,0)}$ is our claimed minimizer: the figure--$8$ knot
labeled $4$. The four-fold manifold cover $N$ is the
cyclic cover of $S^3$ branched over the figure--$8$ knot. After drilling a short curve in $N$ and filling it back in, Snap can get a a triangulation with all positively oriented tetrahedra. (See the auxiliary files \cite{af-data} for a precise description of this triangulation.)
Then,
the Milley--Moser algorithm can certify that $\vol(N) = 2 v_3 = 2.02988...$, hence
$\vol(\OO) = v_3/2 = 0.5074...$.

By Example \ref{ex:meridian-filling}, each of $\tt m006(4,0)$ and $\tt m009(4,0)$ also has a four-fold, cyclic manifold cover. For each of these manifolds, Snap can find a positively oriented triangulation by drilling and refilling a short curve. The precise description of these positively oriented triangulations is given in the accompanying data set \cite{af-data}.
\end{proof}

\appendix

\section{Volume bounds for small collar radius}\label{s:smalltube}

The line of argument in Section \ref{s:drill-fill} relies on having an embedded collar of some size about the singular locus of $\OO$. In this appendix, we classify the link orbifolds whose torsion order is $3$ and whose collar radius is small, in particular bounded above by $0.294$. As it happens, there are only two link orbifolds with this property.

\begin{theorem}\label{smalltube}
Let $\OO$ be a hyperbolic link orbifold with $3$--torsion only.  Suppose that the
collar radius of $\Sigma_\OO$ is at most $0.294$.  Then $\OO$ is one of the following two orbifolds, both with base space $X_\OO = S^3$:
\begin{enumerate}
\item $\OO = \OK$, the knot orbifold of Figure \ref{fig:knots}, left, and $\vol(\OO) = 0.31423...$.

\item $\OO = \OO_W$, the link orbifold of Figure \ref{fig:whitehead}, and $\vol(\OO) = 0.52772...$.

\end{enumerate}
\end{theorem}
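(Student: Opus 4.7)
The strategy is to exploit the small collar radius to produce a two-generator subgroup of $\pi_1(\OO)$ of highly constrained type, and then to enumerate all possibilities rigorously with the program Tube \cite{snap-tube}, combining this with the work of Gehring, Machlachlan, Martin, and Reid \cite{GMMR}.

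First I would pick a component $\alpha$ of $\Sigma_\OO$ realizing the minimum collar radius $r \leq 0.294$. By definition of collar radius, there is a second singular geodesic $\beta$ --- either a distinct component of $\Sigma_\OO$ or a $\Gamma$--translate of the chosen lift of $\alpha$ --- whose axis lies at distance exactly $2r \leq 0.588$ from the fixed lift of $\alpha$. Let $f, g \in \Gamma = \pi_1(\OO)$ be order--$3$ elliptics fixing these two axes, and set $\Gamma_0 = \langle f, g \rangle \subset \Gamma$. Then $\Gamma_0$ is discrete, and the complex parameters encoding $(f, g)$ up to conjugacy lie in a specific compact subset of $\CC^2$ cut out by the bounds on $r$ and on the axial twist.

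The main step is to use Tube to enumerate rigorously all discrete two-generator subgroups of $\PSL_2(\CC)$ generated by a pair of order--$3$ elliptics whose axes are within distance $0.588$ and whose collar radius about the $\alpha$--axis is at most $0.294$. This is a direct adaptation of the parameter-space searches used in \cite{GMMR, gehring-martin:minimal-orbifold, marshall-martin:minimal-orbifold2}: one subdivides the relevant compact region of $\CC^2$ into boxes and, via interval arithmetic, discards any box on which some short word in $f, g$ can be certified either to violate J{\o}rgensen's inequality or to produce an accidentally small translation length. Only a short list of discrete candidates should survive, each realized by an arithmetic group commensurable with $\Gamma_0$ and hence amenable to the commensurator tables of \cite{GMMR}.

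For each surviving $\Gamma_0$ I would identify the possible overgroups $\Gamma$ whose quotient is a link orbifold with only $3$--torsion. Lemma \ref{lemma:no-cusps} rules out cusps and Proposition \ref{prop:reduction} rules out essential turnovers, so the only admissible $\Gamma$ are cocompact supergroups of $\Gamma_0$ whose singular locus forms a link. I expect exactly two such to survive: $\OK$, where $\beta$ is a $\Gamma$--translate of the lift of $\alpha$, and $\OO_W$ from Figure \ref{fig:whitehead}, where $\alpha$ and $\beta$ are distinct components. The exact volumes $0.31423\ldots$ and $0.52772\ldots$ can then be certified using Snap plus the Milley--Moser algorithm, as in Proposition \ref{prop:moser-search}. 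The main obstacle is making the Tube enumeration genuinely exhaustive over the compact parameter region and ruling out any spurious discrete candidates that happen to satisfy the short-axis inequalities but do not in fact arise as subgroups of a cocompact link-orbifold group.
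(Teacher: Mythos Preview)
Your overall strategy---produce a two-generator subgroup from a closest pair of elliptic axes, classify it, then pin down the ambient group---matches the paper's. But you miss the key reduction, and as a result your proposal requires a new computer search rather than invoking one already in the literature.

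The paper does \emph{not} work with two order--$3$ generators. Instead, given the two closest order--$3$ axes $\alpha,\gamma$ at distance $2r$, it introduces the $\pi$--rotation $g$ about the geodesic $\beta$ bisecting the common perpendicular, so that $gfg = h^{\pm 1}$. This yields a group $G = \langle f,g\rangle$ generated by an order--$3$ and an order--$2$ elliptic, containing your $\Gamma_0 = \langle f,h\rangle$ with index $2$. The point is that the GMMR classification \cite{GMMR} is stated precisely for such $(3,2)$ pairs: when the order--$3$ axis is simple with collar radius $\leq 0.294$, $G$ must be one of three explicit arithmetic groups $G_{3,6}$, $G_{3,7}$, $G_{3,10}$. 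No new Tube enumeration is needed. Your proposal to run a fresh parameter-space search over pairs of order--$3$ elliptics is in principle feasible but is genuinely new work, not a citation; and ``I expect exactly two such to survive'' is not a proof.

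The endgame also differs. Once the paper knows $\Gamma_0$ equals $\pi_1(\OO_K)$ or $\pi_1(\OO_W)$ (identified by matching collar radii and covolumes), the containment $\Gamma_0 \subset \Gamma$ says $\OO_K$ or $\OO_W$ \emph{covers} $\OO$. Degree $1$ is the desired conclusion; degree $2$ is ruled out by inspecting symmetry groups; degree $\geq 3$ is ruled out by the drill/fill machinery of Section~\ref{s:drill-fill} (Proposition~\ref{drillbound} plus Theorem~\ref{thm:moms} plus Proposition~\ref{prop:moser-search}) giving a volume contradiction. Your appeal to commensurator tables and to Lemma~\ref{lemma:no-cusps} and Proposition~\ref{prop:reduction} does not address why $[\Gamma:\Gamma_0]$ must equal $1$.
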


\begin{figure}[h]
\labellist
\small\hair 2pt
\pinlabel {$3$} [r] at 1 60
\pinlabel {$3$} [br]  at 25 107
\endlabellist
\includegraphics[scale=0.8]{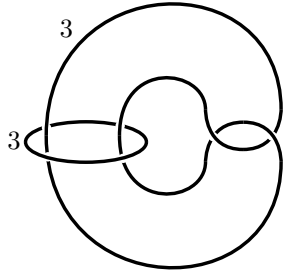}
\caption{The orbifold $\OO_W$, with singular locus the Whitehead link.}
\label{fig:whitehead}
\end{figure}

The hypothesis $r \leq 0.294$, as well as the highly restricted conclusion, comes from the following theorem of Gehring, Machlachlan,
Martin, and Reid \cite{GMMR}.

\begin{theorem}[Gehring--Machlachlan--Martin--Reid]\label{thm:gmmr}
Let $G = \langle f,g \rangle$  be a Kleinian group, generated by an elliptic $f$ of order $3$ and an elliptic $g$ of order $2$. Suppose that the axis of $f$ has collar radius $r \leq 0.294$. Then $G$ is conjugate in $PSL(2, \CC)$ to one of twelve arithmetic groups $G_{3,i}$ for $1 \leq i \leq 12$, enumerated in \cite[Table 6]{GMMR}. See also Table \ref{table:gmmr} below.

Furthermore, if the axis of $f$ is simple, then $G$ is conjugate to one of $G_{3,i}$ for $i = 6,7, 10$.
\end{theorem}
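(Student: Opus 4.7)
The plan is to parameterize two-generator Kleinian groups $G = \langle f,g \rangle$ with $f$ elliptic of order $3$ and $g$ elliptic of order $2$ by a single complex invariant. Following the standard Gehring--Martin framework, one sets $\gamma = \gamma(f,g) := \tr[f,g] - 2 \in \CC$. Since the orders of $f$ and $g$ are fixed, $\gamma$ determines the conjugacy class of the ordered pair $(f,g)$ in $\PSL(2,\CC)$, so the theorem reduces to identifying which values of $\gamma$ correspond to discrete groups whose axis of $f$ has collar radius at most $0.294$.

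The second step is to translate the collar-radius bound into a condition on $\gamma$. By definition, the collar radius of the axis of $f$ is half the infimum, over $h \in G$ not stabilizing the axis, of the complex axial distance from $\mathrm{axis}(f)$ to its translate $h \cdot \mathrm{axis}(f)$. The axial-distance machinery of Gehring--Martin \cite{gmm:axial-distances, gehring-martin:commutators-collars} expresses $\cosh$ of this distance as a rational function of traces of short words in $f$ and $g$, hence as a polynomial in $\gamma$ of bounded degree. Applying this to a finite family of words $w$ produces inequalities $|P_w(\gamma)| \geq \psi(r)$; the hypothesis $r \leq 0.294$ confines $\gamma$ to a bounded region $\Omega \subset \CC$. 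Running Jorgensen-type inequalities in parallel at every archimedean place then forces $\gamma$ to be an algebraic integer all of whose Galois conjugates obey the same kind of bound, so only finitely many candidate values of $\gamma$ remain.

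For each surviving candidate $\gamma$ one computes the invariant trace field $k(G)$ and the invariant quaternion algebra $A(G)$, and matches the pair $(k(G), A(G))$ against the Borel-style enumeration of arithmetic Kleinian groups of small covolume; exactly twelve survive, yielding the list $G_{3,1},\ldots,G_{3,12}$. The simple-axis refinement is then a finite group-theoretic check on these twelve: for $i \notin \{6,7,10\}$ one exhibits a specific short word $h \in G_{3,i}$ such that $h \cdot \mathrm{axis}(f)$ is distinct from $\mathrm{axis}(f)$ yet meets it, while for $i \in \{6,7,10\}$ one verifies directly that every element of $G_{3,i}$ either stabilizes $\mathrm{axis}(f)$ or moves it to a disjoint geodesic. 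The main obstacle is the bounded-to-finite transition: making the axial-distance inequalities sharp enough that the intersection of $\Omega$ with the set of algebraic integers of the prescribed type is genuinely finite and small requires the full Gehring--Martin program on iterated commutators and collars, and the numerical threshold $0.294$ is precisely the constant at which this program yields a tractable list.
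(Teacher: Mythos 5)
A preliminary remark on what you are being compared against: the paper offers no proof of this statement at all --- it is quoted, with attribution, from Gehring--Maclachlan--Martin--Reid \cite{GMMR} (their Table 6), and is used in the Appendix as a black box. So the only meaningful comparison is with the cited work itself. Your outline does track the broad architecture of that program: pass to the commutator parameter $\gamma(f,g)=\tr[f,g]-2$ (with $\beta(f)=-3$ and $\beta(g)=-4$ fixed, $\gamma$ determines $\langle f,g\rangle$ up to conjugacy provided $\gamma\neq 0$), convert the collar hypothesis into a bound on $\gamma$ through the axial-distance/trace identities, reduce to finitely many candidate values, and identify the survivors via invariant trace fields and quaternion algebras.

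The genuine gap is in the bounded-to-finite step, which is the entire content of \cite{GMMR}, and the mechanism you propose for it does not work as stated. You cannot ``run Jorgensen-type inequalities in parallel at every archimedean place'': discreteness is a property of the single distinguished embedding, and at the other archimedean places the image of $G$ is in general indiscrete, so Jorgensen gives no constraint there. In the actual argument the finiteness comes from integer-coefficient trace-polynomial identities attached to families of words (so that $\gamma(f,wfw^{-1})=p_w(\gamma(f,g))$), with discreteness at the one embedding forbidding these iterated values from entering small punctured neighborhoods of the elementary parameters; a computer-assisted covering of the bounded region then eliminates all but a short explicit list of algebraic-integer values of $\gamma$, and arithmeticity is verified case by case --- it is a conclusion of the analysis, not something obtained by matching against a covolume census of arithmetic groups. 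The threshold $0.294$, the count of exactly twelve groups $G_{3,i}$, and the refinement that a simple axis forces $i\in\{6,7,10\}$ are outputs of that computation; nothing in your sketch recovers them, and you concede as much by saying the ``full Gehring--Martin program'' is required. As written, then, the proposal is a summary of the strategy behind the cited theorem rather than a proof of it; for the purposes of this paper, citing \cite{GMMR} (as the authors do) is the correct and intended treatment.
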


\begin{table}[h]
\begin{tabular}{| c | c | c |}
\hline
$i$ & Collar radius & $2 \times $Co-volume \\
\hline
6 & 0.24486... & 0.31423... \\
7 & 0.24809... & 0.31423... \\
10 & 0.27702... & 0.52772... \\
\hline

\end{tabular}

\vspace{1ex}

\caption{Arithmetic Kleinian groups $G_{3,i}$, generated by a simple elliptic $f_i$ of order $3$ and an elliptic $g_i$ of order $2$.}
\label{table:gmmr}
\end{table}

To apply Theorem \ref{thm:gmmr}, we need to relate our link orbifold to the two-generator groups as above.

\begin{lemma}\label{gavens-lemma}
Let $\Gamma = \pi_1(\OO)$ be a Kleinian group in which all elliptic axes are simple, and have the same torsion order $n \geq 3$. Let $r$ be the collar radius of $\Sigma_\OO$. Then there is a Kleinian group $G = \langle f,g \rangle$, generated by an elliptic $f$ of order $n$ and an elliptic $g$ of order $2$, such that the axis of $f$ is again simple, and has collar radius equal to $r$. Furthermore, an index--$2$ subgroup $H \subset G$ is also a subgroup of $\Gamma$.\end{lemma}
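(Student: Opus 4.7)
The plan is to construct $G$ by introducing an involution $g$ that interchanges $A := \widetilde{\alpha}$ with a closest lift $B$ of $\Sigma_\OO$. I would proceed in three steps.

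First, find a lift $B$ of $\Sigma_\OO$ in $\HH^3$, distinct from $A$, with $d(A,B) = 2r$; such a $B$ exists because the collar radius is achieved in a finite-volume hyperbolic orbifold. By hypothesis $B$ is the axis of some $f_B \in \Gamma$ of order $n$. Let $g$ be an orientation-preserving involution of $\HH^3$ that interchanges $A$ and $B$, and set
\[ G := \langle f, g\rangle, \qquad H := \langle f, \, g f g^{-1}\rangle. \]

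Second, I would verify that $H \subset \Gamma$ and $[G:H] = 2$. The element $gfg^{-1}$ is orientation-preserving, elliptic of order $n$, with axis $g(A) = B$; any such element is a power of $f_B$, so $gfg^{-1} \in \langle f_B\rangle \subset \Gamma$, giving $H \subset \Gamma$. Conjugation by $g$ swaps the two generators of $H$, so $H$ is normal in $G$, and the relation $g^2 = 1$ reduces every word in $\{f,g\}$ to something in $H \cup gH$. Because all torsion in $\Gamma$ has order $n \geq 3$, $\Gamma$ contains no involutions, so $g \notin \Gamma \supseteq H$; hence $G = H \sqcup gH$ with $[G:H] = 2$. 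Discreteness of $G$ then follows because $H \subset \Gamma$ is discrete and $gH$ is its left-translate in $\PSL(2,\CC)$.

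Third, I would show that the axis $A$ of $f$ is simple in $G$ with collar radius equal to $r$. For $\tau \in H \subset \Gamma$, the image $\tau A$ is a lift of $\Sigma_\OO$, so either $\tau A = A$ or $d(A, \tau A) \geq 2r$ by the collar radius hypothesis on $\Gamma$. For $\tau = g\sigma$ with $\sigma \in H$, the identity $g^2 = 1$ and the isometric action of $g$ give
\[ d(A,\, g\sigma A) \: = \: d(gA,\, \sigma A) \: = \: d(B,\, \sigma A), \]
and $B$ and $\sigma A$ are both lifts of $\Sigma_\OO$: they are either equal (in which case $\tau A = gB = A$) or at distance at least $2r$. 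Hence the $r$-neighborhood of $A$ is precisely invariant under $G$, so the collar radius is at least $r$; since $gA = B$ is at distance exactly $2r$ from $A$, the collar radius equals $r$.

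The main obstacle is the simplicity check in the third step for the ``non-$\Gamma$'' elements $g\sigma \in gH$. A priori the geodesic $g\sigma A$ could be an entirely new translate not governed by the collar hypothesis on $\Gamma$. The trick that resolves this is to exploit $g^2 = 1$ to rewrite $d(A, g\sigma A) = d(B, \sigma A)$, converting the problem into a distance between two standard lifts of $\Sigma_\OO$, to which the collar radius hypothesis applies directly.
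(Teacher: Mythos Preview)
Your construction and your collar--radius verification (Step~3, via the identity $d(A,g\sigma A)=d(B,\sigma A)$) match the paper's argument essentially line for line. The one genuine gap is in Step~2, where you justify $g\notin H$ by asserting that ``all torsion in $\Gamma$ has order $n\ge 3$, so $\Gamma$ contains no involutions.'' The hypothesis only says each elliptic \emph{axis} has stabilizer $\ZZ_n$; when $n$ is even, the element $f^{n/2}$ is an involution in $\Gamma$, so this reasoning fails. (It does go through for odd $n$, which happens to cover the paper's only application at $n=3$, but the lemma is stated for all $n\ge 3$.)

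The repair is geometric rather than algebraic, and it works for your unspecified choice of $g$ just as well as for the paper's explicit one. Any orientation-preserving involution swapping the disjoint geodesics $A$ and $B$ must preserve their unique common perpendicular $P$ and fix its midpoint $m$; hence the axis $C$ of $g$ passes through $m$ and meets $P$ orthogonally there. Since $A$ is also orthogonal to $P$, the segment of $P$ from $m$ to $A\cap P$ is the common perpendicular of $C$ and $A$, giving $d(C,A)=r$. Now if $g$ lay in $H\subset\Gamma$ it would be elliptic in $\Gamma$, so $C$ would be a lift of $\Sigma_\OO$ distinct from $A$, forcing $d(C,A)\ge 2r$ --- a contradiction. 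The paper builds this in by choosing $g$ to be the $\pi$--rotation about the midpoint geodesic $\beta$, so that $d(\beta,A)=r$ is explicit from the outset.
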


Variants of Lemma \ref{gavens-lemma} appear in the work of Gehring, Marshall and Martin; compare \cite[Lemma 2.26]{gehring-martin:commutators-collars}. We thank Gaven Martin for explaining the proof.

\begin{proof}
Let $r$ be the collar radius of $\Gamma$. This means  that in $\HH^3$, the closest distance between a pair of elliptic axes for $\Gamma$ is precisely $2r$. Let $\alpha, \gamma$ be two such axes at distance $2r$, and let $f, h \in \Gamma$ be the order--$n$ rotations about these axes. Let $H = \langle f, h \rangle$ be the subgroup of $\Gamma$ generated by $f$ and $h$.

Note that $\alpha$ and $\gamma$ do not share any endpoints at infinity; if they did, the distance between them would approach $0$. Hence there is a unique geodesic segment $s$ that meets $\alpha$ and $\gamma$ perpendicularly. The midpoint of $s$ meets a geodesic $\beta$, with the property that $\pi$--rotation about $\beta$ maps $\alpha$ to $\gamma$. The complex length between $\alpha$ and $\beta$ is exactly half that from $\alpha$ to $\gamma$; in particular, $\beta$ lies at distance $r$ from both $\alpha$ and $\gamma$.

Let $g$ be the $\pi$--rotation about $\beta$. Then $gfg = h^{\pm 1}$, hence $H = \langle f, h \rangle =  \langle f, \, gfg \rangle $. Define $G = \langle f,g \rangle$ to be the subgroup of $PSL(2,\CC)$ generated by $f$ and $g$. Then $H$ is precisely the subgroup of $G$ consisting of all elements expressible by a word in $f,g$ with an even number of $g$'s. Hence $G = H \cup gH$, and $H$ has index $2$. Thus, since $H$ is discrete, $G$ is also discrete.

 Finally, observe that the orbit of $\alpha$ under $G = \langle f,g \rangle$ is exactly the same as the orbit of $\alpha \cup \gamma$ under $H =   \langle f, \, gfg \rangle $. Thus a collar of radius $r$ about $\alpha$ is precisely invariant under $G$, and $r$ is the largest value with this property. Since the collar is precisely invariant, then $\alpha$ itself is precisely invariant, hence simple.
\end{proof}

\begin{example}\label{ex:52-knot}
Consider the knot orbifold $\OO_K$ of Figure \ref{fig:knots}. Using the program Tube \cite{snap-tube}, we verify that the singular locus of $\OO_K$ has collar radius $0.24486...$. This means that, in the lift to $\HH^3$, the closest pair of elliptic axes are at distance $2 \times 0.24486...$ from each other. Setting these two axes to be $\alpha$ and $\gamma$, and constructing $\beta$ between them as in the proof of Lemma  \ref{gavens-lemma}, will produce a $2$--generator group $G = \langle f, g \rangle$, where the axis of $f$ has collar radius $0.24486$. By Theorem \ref{thm:gmmr}, this two-generator group must be conjugate to $G_{3,6}$ from Table \ref{table:gmmr}. After conjugation, we assume that $G = G_{3,6}$.

In the universal cover of $\OO_K$, there are also a pair of order--$3$ axes at distance $2 \times 0.24809$ (again, this is computed using Tube). Setting \emph{these} two axes to be $\alpha$ and $\gamma$,  as in the proof of Lemma  \ref{gavens-lemma}, will produce the two-generator group $G_{3,7}$ from Table \ref{table:gmmr}. The identification of $G = \langle f, g \rangle$ with $G_{3,7}$ is rigorous because of Theorem \ref{thm:gmmr}.

Thus, for both $i =6$ and $i=7$, the index--$2$ subgroup $H_{3,i} = \langle f_i , h_i \rangle \subset G_{3,i}$ coincides with a subgroup of $\pi_1(\OO_K)$. Furthermore, since the volume of $\OO_K$ equals the co-volume of $H_{3,i}$, we conclude that $H_{3,i} = \pi_1(\OO_K)$. 
\end{example}

\begin{example}\label{ex:whitehead}
Let $\OO_W$ be the link orbifold of Figure \ref{fig:whitehead}, with base space $S^3$ and singular locus the Whitehead link, labeled $3$. Using Tube, as in Example \ref{ex:52-knot}, we verify that the collar radius of $\Sigma_{\OO_W}$ is $0.27702...$. Lifting to $\HH^3$, we obtain a pair of elliptic axes $\alpha, \gamma$ whose distance is  $2 \times 0.27702...$ from each other. Thus, by Theorem \ref{thm:gmmr}, the $2$--generator group $G_{3,10}$ has an index--$2$ subgroup $H_{3,10} \subset \pi_1(\OO_W)$. As in Example  \ref{ex:52-knot}, volume considerations tell us that $H_{3,10} = \pi_1(\OO_W)$.
\end{example}

Using these tools, we may now complete the proof of Theorem \ref{smalltube}.

\begin{proof}[Proof of Theorem \ref{smalltube}]
Let $\OO$ be a hyperbolic link orbifold with $3$--torsion only, and assume that the collar radius of $\Sigma_\OO$ is $r \leq 0.294$. Let $G = \langle f,g \rangle$ be the two-generator Kleinian group associated to $\Gamma = \pi_1(\OO)$, as in Lemma \ref{gavens-lemma}, and let $H = \langle f, \, gfg \rangle$ be the index--$2$ subgroup of $G$ that is also a subgroup of $\Gamma$.

By Theorem \ref{thm:gmmr}, $G$ is one of the three groups $G_{3,i}$ for $i = 6,7, 10$,  enumerated in Table \ref{table:gmmr} and \cite[Table 6]{GMMR}. Then, by Examples \ref{ex:52-knot} and \ref{ex:whitehead}, we know that $H = H_{3,i}$ is either $\pi_1(\OO_K)$ or $\pi_1(\OO_W)$, where $\OO_K$ and $\OO_W$ are the link orbifolds of Figure \ref{fig:knots} and \ref{fig:whitehead}, respectively.

In other words, $\OO$ is covered by either $\OO_K$ or $\OO_W$. We want to show that the degree of the cover is $1$, i.e.\ $\OO$ is isomorphic to either $\OO_K$ or $\OO_W$.

If the covering degree is $2$, then the cover is regular, given by a symmetry of $\OO_K$ or $\OO_W$. An examination of their symmetry groups, using Snap, confirms that none of their twofold quotients are link orbifolds with $3$--torsion only.

If the covering degree is $3$ or higher, we argue by drilling and filling, as in Section \ref{s:drill-fill}.

Suppose that $\OO$ is covered by $\OO_K$, with covering degree at least $3$. By hypothesis, the collar radius of $\OO$ is $r \geq 0.2448$. Let $\MO = X_\OO \setminus \Sigma_\OO$ be the cusped manifold created by drilling out $\Sigma_\OO$.
By Proposition \ref{drillbound},
$$\vol(\MO) \: \leq \: (\coth^3{2r}) \left( 1 + \frac{0.91}{\cosh{2r}} \right) \cdot \frac{\vol(\OO_K)}{3} \: \leq \:  19.365 \cdot 0.1048  \: = \: 2.029... $$
By Theorem \ref{thm:moms}, $\MO$ is one of the ten manifolds enumerated in that theorem (in fact, its volume is so low that it must be either ${\tt m003}$ or ${\tt m004}$). By Proposition \ref{prop:moser-search}, $\vol(\OO) \geq \vol(\OO_K)$, which contradicts the assumption that it is covered by $\OO_K$ with degree at least $3$.

Similarly, suppose that $\OO$ is covered by $\OO_W$, with covering degree at least $3$. By hypothesis, the collar radius of $\OO$ is $r \geq 0.2770$. Again, let $\MO = X_\OO \setminus \Sigma_\OO$ be the cusped manifold created by drilling out $\Sigma_\OO$.
By Proposition \ref{drillbound},
$$\vol(\MO) \: \leq \: (\coth^3{2r}) \left( 1 + \frac{0.91}{\cosh{2r}} \right) \cdot \frac{\vol(\OO_W)}{3} \: \leq \:  14.00 \cdot 0.1760  \: = \: 2.462... $$
Thus, once again, $\MO$ is one of the ten manifolds enumerated in Theorem \ref{thm:moms},  and  Proposition \ref{prop:moser-search} implies that $\vol(\OO) \geq 0.3142$. But if $\OO$ is covered by $\OO_W$ with degree at least $3$, its volume is at most $0.1760$, which is a contradiction.
\end{proof}

\bibliographystyle{plain}
\bibliography{./atkinson-futer_references}
\end{document}